\newtheorem{theorem}{Theorem}
\newtheorem{proposition}{Proposition}
\newtheorem{lemma}{Lemma}
\def\Z{\mathbb{Z}}
\def\Zp{\mathbb{Z}_{p}}
\def\Qp{\mathbb{Q}_{p}}
\def\F{\mathbb{F}}
\def\mp#1{\ ({\rm mod} \ p^{#1})}
\def\X{\mathbb{X}}
\begin{document}

%% Title, authors and addresses

%% use the tnoteref command within \title for footnotes;
%% use the tnotetext command for the associated footnote;
%% use the fnref command within \author or \address for footnotes;
%% use the fntext command for the associated footnote;
%% use the corref command within \author for corresponding author footnotes;
%% use the cortext command for the associated footnote;
%% use the ead command for the email address,
%% and the form \ead[url] for the home page:
%%
%% \title{Title\tnoteref{label1}}
%% \tnotetext[label1]{}
%% \author{Name\corref{cor1}\fnref{label2}}
%% \ead{email address}
%% \ead[url]{home page}
%% \fntext[label2]{}
%% \cortext[cor1]{}
%% \address{Address\fnref{label3}}
%% \fntext[label3]{}

\title{Dynamics of the square mapping on the ring of $p$-adic integers}

%% use optional labels to link authors explicitly to addresses:
%% \author[label1,label2]{}
%% \address[label1]{}
%% \address[label2]{}

\author{Shilei Fan}

\address{School of Mathematics and Statistics, Central China Normal University, 430079, Wuhan, China}
\email{slfan@mail.ccnu.edu.cn}

\author{Lingmin Liao}

\address{LAMA, UMR 8050, CNRS,
Universit\'e Paris-Est Cr\'eteil Val de Marne, 61 Avenue du
G\'en\'eral de Gaulle, 94010 Cr\'eteil Cedex, France}
\email{lingmin.liao@u-pec.fr}

\begin{abstract}
%% Text of abstract

For each prime number $p$, the dynamical behavior of the square mapping on the ring $\mathbb{Z}_p$  of $p$-adic integers is studied. For $p=2$, there are only attracting fixed points with their attracting basins. For $p\geq 3$, there are a fixed point $0$ with its attracting basin, finitely many periodic points around which there are countably many minimal components and some balls of radius $1/p$ being attracting basins. All these minimal components are precisely exhibited for different primes $p$.
\end{abstract}
\subjclass[2010]{Primary 37P05; Secondary 11S82, 37B05}
\keywords{$p$-adic
dynamical system, minimal decomposition, square mapping}
% \thanks{S.F. was partially supported by NSF of China (Grant No. 11231009) and CNRS program (PICS No.5727), L.L. was partially supported by 12R03191A - MUTADIS (France) and the project PHC Orchid of MAE and MESR of France}
 %{\it Minist\`eres des Affaires \'Etrang\`eres} and {\it Minist\`eres de l'Enseignement Sup\'erieur et de la Recherche}
\maketitle
%\begin{keyword}
%% keywords here, in the form: keyword \sep keyword
%$p$-adic dynamical system \sep  minimal component \sep quadratic
%polynomial.
%% PACS codes here, in the form: \PACS code \sep code

%% MSC codes here, in the form: \MSC code \sep code
%% or \MSC[2008] code \sep code (2000 is the default)
%\MSC[2000]{37E99 \sep 11S85 \sep 37A99}
%\end{keyword}
\maketitle

%% \linenumbers
\section{Introduction}
%The theory of non-Archimedean, in particular of $p$-adic  dynamical system has been recently intensively developed \cite{Benedetto-Hyperbolic-maps,Benedetto-Components-periodic-points,Benedetto-wandering-domain-polynomial,Rivera-Letelier-Dynamique-rationnelles-corps-locaux,Rivera-Letelier-Espace-hyperbolique}.
%See also the monographs \cite{Anashin-Khrennikov-AAD,KhrennikovNilson04book,Silverman-dynamics-book} and their bibliographies therein.

The dynamics of the quadratic maps on finite fields or rings attracts much attention in the literature (\cite{Cha84, GKRS01,Rog96,SK2006, VS04itergrap}). In particular, Rogers \cite{Rog96} studied the square mapping $f: x\mapsto x^2$ on the prime field $\Z/p\Z=\mathbb{F}_{p}$, with $p$ being a prime number.
%we know that the dynamics of polynomials are linked to their induced dynamics on $\Z/p^{n}\Z$. The action of the squaring function $f(x)=x^2$ on the finite field $\mathbb{F}_{p}$  ( or $\Z/p\Z$) of the prime fields have been extensively  studied by Rogers \cite{Rogers96} and Vasiga, Shallit \cite{VS04itergrap}.

Notice that for the square mapping, the point $0$ is fixed and one needs only to consider the points in the multiplicative group $\mathbb{F}^*_{p}:=\mathbb{F}_{p}\setminus \{0\}$.
Denote by $\varphi$ the \emph{Euler's phi function}.  For an integer $d\geq 2$, the \emph{order of $2$ modulo }$d$, which will be denoted by $ord_{d}2$, is the smallest positive integer $i$ such that
$2^i\equiv 1 ({\rm mod} \ d)$. By convention, $ord_{1}2$ is set to be $1$.
Define a directed graph $G(\mathbb{F}^*_{p})$ whose vertices are
  the elements of $\mathbb{F}^*_{p}$ and whose edges are directed from $x$ to $f(x)$ for each $x\in\mathbb{F}^*_{p}$.  Let $\sigma(\ell,k)$ be the graph consisting a cycle of length $\ell$ with a  copy of the binary tree $T_{k}$ of height $k$ attached to each vertex.
 The dynamical structure of
   the square mapping on $\mathbb{F}^*_{p}$ is described by the following theorem of Rogers \cite{Rog96}.
   \begin{theorem}[\cite{Rog96}]\label{graph}
Let $p$ be an odd prime. Put $p=2^{k}m+1$ where $m$ is odd. Then
$$G(\mathbb{F}^*_{p})=\bigcup_{d|m}   \underbrace{(\sigma(ord_{d}2,k)\cup\ldots\cup\sigma(ord_{d}2,k))}_{\varphi(d)/ord_{d}2}.$$
%where $\varphi(d)$ is Euler's $\varphi$ function, $ord_{d}2$ is the order of $2$ mod $d$ and $\sigma(\ell,k)$ consists of a cycle of length $\ell$ with a  copy of the
%binary tree $T_{k}$ of height $k$ attached to each vertex.
\end{theorem}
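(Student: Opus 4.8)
The plan is to transport the whole question to the additive group $\mathbb{Z}/(p-1)\mathbb{Z}$ via a primitive root and then split by the Chinese Remainder Theorem. Fix a primitive root $g$ modulo $p$; the isomorphism $\mathbb{Z}/(p-1)\mathbb{Z}\to\mathbb{F}^*_p$, $j\mapsto g^j$, conjugates the doubling map $T:j\mapsto 2j$ to the square map $f:x\mapsto x^2$. Since $p-1=2^km$ with $m$ odd, we have $\mathbb{Z}/(p-1)\mathbb{Z}\cong\mathbb{Z}/2^k\mathbb{Z}\times\mathbb{Z}/m\mathbb{Z}$, under which $T$ becomes the coordinatewise map $(a,b)\mapsto(2a,2b)$. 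So it suffices to understand the functional graph of doubling on each factor and then to reassemble.

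On the odd part $\mathbb{Z}/m\mathbb{Z}$: because $\gcd(2,m)=1$, multiplication by $2$ is a bijection, and it preserves $\gcd(\,\cdot\,,m)$, hence permutes the (unique) cyclic subgroup of each order $d\mid m$ among its own elements. For $b$ of order $d$ one has $2^ib\equiv b\pmod m$ iff $d\mid 2^i-1$, so the $T$-orbit of $b$ has length exactly $\mathrm{ord}_d 2$, which divides $\varphi(d)$ by Lagrange applied to $2\in(\mathbb{Z}/d\mathbb{Z})^*$. Therefore the $\varphi(d)$ elements of order $d$ split into precisely $\varphi(d)/\mathrm{ord}_d 2$ cycles of length $\mathrm{ord}_d 2$, and $\mathbb{Z}/m\mathbb{Z}$ consists entirely of these cycles.

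On the $2$-part $\mathbb{Z}/2^k\mathbb{Z}$: since $2^k a\equiv 0$, every point reaches the fixed point $0$ in at most $k$ steps, so the graph is a single tree rooted at $0$ (with the self-loop at $0$). For its shape, note $2a\equiv 2a'\pmod{2^k}$ iff $a\equiv a'\pmod{2^{k-1}}$, so every element of the image $2\mathbb{Z}/2^k\mathbb{Z}$ has exactly two preimages, while the $2^{k-1}$ odd residues have none and form the leaves; assigning level $k-v_2(a)$ to $a\neq 0$, level $0$ is $\{0\}$, level $1$ is $\{2^{k-1}\}$, each level $1\le j\le k$ has $2^{j-1}$ vertices, and each vertex of level $j-1$ ($j\ge 2$) receives two edges from level $j$. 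This is exactly the binary tree $T_k$ of height $k$, which has $1+\sum_{j=1}^{k}2^{j-1}=2^k$ vertices. Now assemble: a connected component of the graph of $(a,b)\mapsto(2a,2b)$ is the set of all $(a,b)$ with $b$ running over one $T$-cycle of $\mathbb{Z}/m\mathbb{Z}$ of length $\ell=\mathrm{ord}_d 2$; its periodic points are the $\ell$ points $(0,b)$ on that cycle, and the in-tree hanging at each such point is, through the first coordinate, a copy of $T_k$ (the second coordinate just rotates backward along the cycle as one ascends, so the isomorphism type is unaffected). Hence this component is $\sigma(\ell,k)=\sigma(\mathrm{ord}_d 2,k)$, and running over all $d\mid m$ and the $\varphi(d)/\mathrm{ord}_d 2$ cycles for each $d$ gives the asserted decomposition; the vertex count checks out, since $\sum_{d\mid m}\frac{\varphi(d)}{\mathrm{ord}_d 2}\cdot \mathrm{ord}_d 2\cdot 2^k=2^k\sum_{d\mid m}\varphi(d)=2^km=p-1$.

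I expect the main obstacle to be the final reassembly step rather than either coordinate analysis: the functional graph of $(a,b)\mapsto(2a,2b)$ is not literally the product of the two functional graphs, so one must argue carefully that the off-cycle vertices of a component are exactly those with nonzero first coordinate, that a backward step from a vertex branches into two or zero preimages according to whether its first coordinate lies in $2\mathbb{Z}/2^k\mathbb{Z}$, and that after $k$ backward steps the first coordinate is forced odd so the branch terminates — this is precisely where the height $k$ and the binary shape of $T_k$ come from, and it is the point that needs the most bookkeeping to state cleanly.
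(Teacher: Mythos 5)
Your proof is correct: the paper states this theorem without proof (it is quoted from Rogers \cite{Rog96}), and your argument --- transporting squaring to doubling on $\mathbb{Z}/(p-1)\mathbb{Z}$ via a primitive root, splitting by CRT into the $2$-part (which yields the height-$k$ binary tree) and the odd part (which yields the $\varphi(d)/ord_d 2$ cycles of length $ord_d 2$), and then reassembling componentwise --- is essentially the standard group-theoretic proof Rogers gives. The reassembly step, which you rightly flag as the delicate point, is handled correctly: the second coordinate is uniquely determined going backward along a cycle, so each in-tree is isomorphic to the tree over $0$ in $\mathbb{Z}/2^k\mathbb{Z}$, and your vertex count confirms the decomposition is exhaustive.
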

The graphs of $G(\mathbb{F}^*_{p})$ for $p=11$ and $17$ are depicted in Figures 1 and 2.
 \begin{figure}
  \centering
  % Requires \usepackage{graphicx}
  \includegraphics[width=0.6\textwidth]{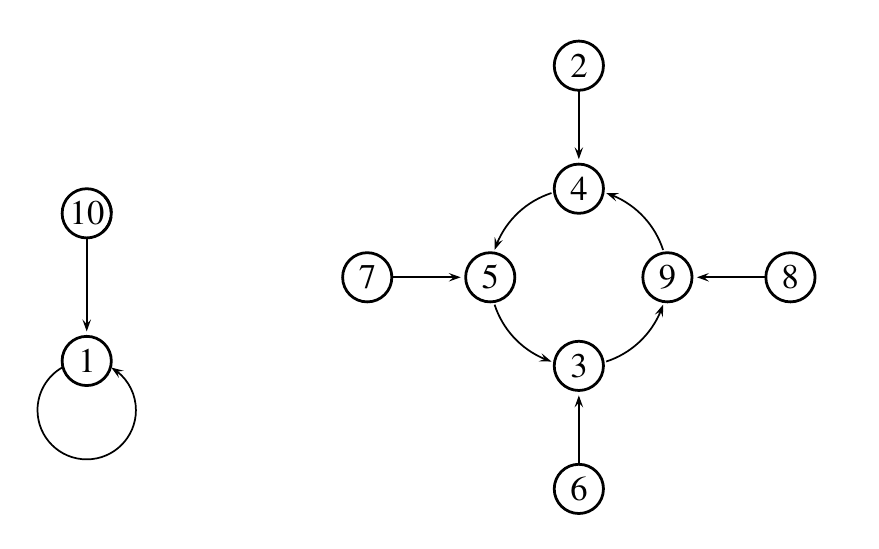}\\
  \caption{The graphs $G(\mathbb{F}^{*}_{p})$ for primes $p=11$ (thus $k=1$, $m=5$ and, $d=1$ and $5$). The vertices
  are the elements of $\mathbb{F}^{*}_{p}$ with  edges
  directed from $x$ to $x^{2}$.}
\end{figure}

\begin{figure}
  \centering
  % Requires \usepackage{graphicx}
  \includegraphics[width=0.6\textwidth]{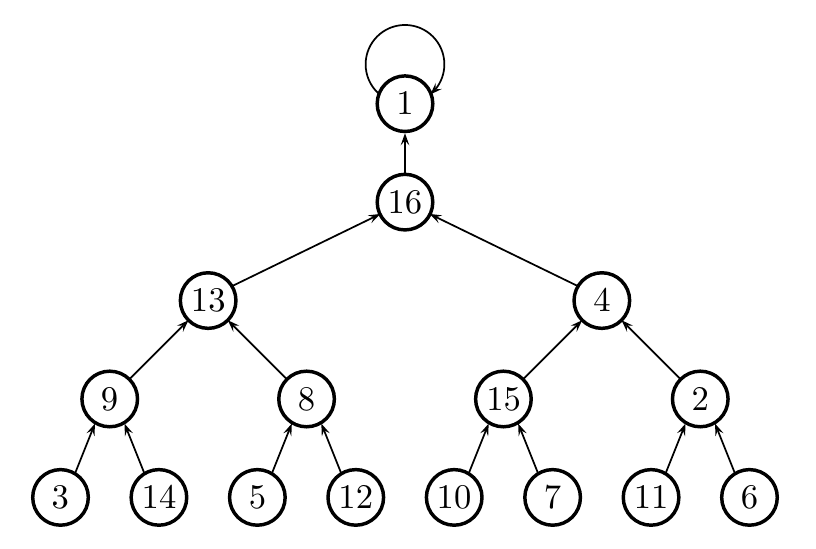}\\
  \caption{The graphs $G(\mathbb{F}^{*}_{p})$ for primes $p=17$ ($k=4, m=1$). The vertices
  are the elements of $\mathbb{F}^{*}_{p}$ with  edges
  directed from $x$ to $x^{2}$.}
\end{figure}

%This note is contributed to the theory of dynamical systems on the field of $p$-adic numbers.
% We refer to \cite{WalGTM79} for dynamical terminology and \cite{Rob-GTM198} for notions related to $p$-adic numbers.
\medskip
In this paper, we will investigate the square mapping $f: x\mapsto x^2$ on all finite rings $\Z/p^n\Z$ and on their inverse limits $\Z_p=\underleftarrow{\lim} \Z/p^n\Z$. The space $\Z_p$ is nothing but the ring of $p$-adic integers. We are thus led to the study of the $p$-adic dynamical system $(\Z_p, f)$.
%Let us give some notions of dynamical systems.

Let $(X,T)$ be a dynamical system with $X$ being a compact metric space and $T$ being a continuous map from $X$ to itself.
 For a point $x\in X$,  the \emph{orbit of $x$ under $T$} is
defined by
$$\mathcal{O}_{T}(x):=\{T^{n}(x):n\geq 0\}.$$
If $E\subset X$ is a $T$-invariant (i.e., $T(E)\subset E$) compact subset,
then $(E, T)$ is a subsystem of $(X,T)$.  The subsystem $(E,T)$ is called {\it minimal} if
 $E$ is equal to the closure $\overline{\mathcal{O}_{T}(x)}$ for
each $x\in E$. We refer to the book of Walters \cite{WalGTM79} for dynamical terminology.

For a prime number $p$,  denote by $\mathbb{Q}_p$ the field of $p$-adic numbers. Then the ring $\Zp$ of $p$-adic integers is the local ring of $\mathbb{Q}_p$. The absolute value on $\mathbb{Q}_p$ is denoted by $|\cdot|_p$. With this non-Archimedean absolute value, $\Zp$ is the unit ball of $\mathbb{Q}_p$ which is both compact and open.  For more details on $p$-adic numbers, one could consult Robort's book \cite{Rob-GTM198}.

Let $f\in\Zp[x]$ be a polynomial with coefficients in $\Zp$. Then $f$ defines a dynamical system on $\Zp$, denoted by $(\Zp,f)$. In the literature the minimality of $f$ on the whole space $\Zp$ %or on the unit sphere of  $\Zp$ ({\bf add the references of Khrennikov})
is widely studied (\cite{Ana94,Ana02,Ana06,AKY11,DP09,Jeong2013, Yur13}). However, if the system is not minimal on $\Zp$, what the dynamical structure of $f$ looks like? To answer this question, one is led to do a minimal decomposition of the space $\Zp$, i.e., to find all the minimal subsystems (minimal components) of $f$.

In general, it is proved by Fan and Liao \cite{FL11} that a polynomial dynamical system $(\Zp,f\in \Zp[x])$ admits at most countablely many minimal subsystems and the polynomial system has a\emph{ minimal decomposition}.
\begin{theorem}[\cite{FL11}]\label{thm-decomposition}
 Let $f \in \mathbb{Z}_p[x]$ with degree
at least $2$. We have the following decomposition
$$
     \mathbb{Z}_p = \mathcal{P} \bigsqcup \mathcal{M} \bigsqcup \mathcal{B}
$$
where $\mathcal{P}$ is the finite set consisting of all periodic points of
$f$, $\mathcal{M}= \bigsqcup_i \mathcal{M}_i$ is the union of all (at most countably
many) clopen invariant sets such that each $\mathcal{M}_i$ is a finite union
of balls and each subsystem $f: \mathcal{M}_i \to \mathcal{M}_i$ is minimal, and each
point of $\mathcal{B}$ lies in the attracting basin of $\mathcal{P}\sqcup \mathcal{M}$.
\end{theorem}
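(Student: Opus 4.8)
The plan is to study $f$ through its reductions $f_n\colon\mathbb{Z}/p^n\mathbb{Z}\to\mathbb{Z}/p^n\mathbb{Z}$, exploiting that a ball of radius $p^{-n}$ in $\mathbb{Z}_p$ is a residue class modulo $p^n$ and that $f_n$, acting on a finite set, is eventually periodic at every point. Call the preimage under reduction of a cycle of $f_n$ a \emph{cycle of balls}: a set $\sigma=B_0\sqcup\cdots\sqcup B_{\ell-1}$ of disjoint balls of radius $p^{-n}$ with $f(B_i)\subseteq B_{i+1\bmod\ell}$; it is clopen and $f$-invariant, and $\mathbb{Z}_p$ is the disjoint union of all cycles of balls at level $n$ together with one clopen set all of whose points eventually enter some $\sigma$. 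I arrange these cycles of balls into a rooted forest: the roots are the cycles of $f_1$ on $\mathbb{F}_p$, and the children of $\sigma$ are the cycles of balls at level $n+1$ contained in $\sigma$. The forest is finitely branching, and the theorem is essentially a description of its infinite branches.

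The technical heart is a refinement lemma. Fix a cycle of balls $\sigma$ of length $\ell$ at level $n$, put $g=f^\ell$, and write a point of $B_0$ as $a+p^nt$. Since $g(B_0)\subseteq B_0$ one gets $g(a+p^nt)=a+p^nh(t)$ with $h\in\mathbb{Z}_p[t]$, and because the nonlinear terms of $h$ all carry a factor $p^n$ one has $h(t)\equiv\lambda t+\delta\pmod p$, where $\lambda$ and $\delta$ are the residues of $g'(a)$ and of $(g(a)-a)/p^n$ modulo $p$. Thus $g$ acts on the $p$ sub-balls of $B_0$ as the affine map $t\mapsto\lambda t+\delta$ of $\mathbb{F}_p$, and reading off its orbit structure yields the behaviour of $\sigma$ at level $n+1$: $\sigma$ \emph{splits} into $p$ cycles of length $\ell$ (if $\lambda=1$, $\delta=0$); $\sigma$ \emph{grows} into one cycle of length $p\ell$ (if $\lambda=1$, $\delta\neq0$); or $\sigma$ contains one distinguished \emph{core} cycle of length $\ell$ into which all the other sub-balls eventually fall (if $\lambda=0$; and if $\lambda\neq0,1$ there are in addition $(p-1)/\mathrm{ord}(\lambda)$ cycles of length $\ell\cdot\mathrm{ord}(\lambda)$). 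Comparing Haar measures, a child has measure at most that of its parent, with equality only when $\sigma$ grows, and otherwise the ratio lies in the discrete set $\{1/p,2/p,\dots,(p-1)/p\}$, bounded away from $1$.

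Next I track how the type of $\sigma$ evolves along a branch, by recomputing $\lambda$ and $\delta$ at successive levels. The crucial stabilization facts are: if $\sigma$ grows at some level $\ge2$ then it grows at every subsequent level; and following one of the longer cycles in the last case forces $\lambda=1$ at the next level, after which only splitting or growing can occur. With the measure gap away from $1$, this gives a clean dichotomy for an infinite branch $(\sigma_m)$. Either it is \emph{eventually growing}: then $\mu(\sigma_m)$ is eventually constant, hence $\sigma_{m+1}=\sigma_m$ for large $m$ (a clopen proper subset of equal measure must be the whole, a nonempty open set having positive measure), so $E:=\bigcap_m\sigma_m$ is a finite union of balls, and $(E,f)$ is minimal because growing forces $[x]_n$ to run over the whole cycle $[E]_n$ for every $n$ and every $x\in E$, whence $\overline{\mathcal{O}_f(x)}=E$. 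Or the branch is \emph{eventually of constant length} with $\mu(\sigma_m)\to0$: then each $\sigma_{m+1}$ contains exactly one sub-ball inside each ball of $\sigma_m$, so $E:=\bigcap_m\sigma_m$ consists of exactly $\ell$ points, a periodic orbit; and the length-bookkeeping shows that along such a branch the length is multiplied, if at all, only at the first one or two levels and overall by a factor at most $p$, so every periodic orbit has length at most $p$ times the least common multiple of the cycle lengths of $f$ modulo $p$. Since $f^q-\mathrm{id}$ is a nonzero polynomial (here $\deg f\ge2$ enters) with finitely many roots, $\mathcal{P}$ is finite.

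It remains to assemble the decomposition. For arbitrary $x\in\mathbb{Z}_p$ and each $n$, let $\sigma'_n$ be the cycle of balls at level $n$ into which the forward orbit of $[x]_n$ falls; these are nested (the reduction of a cycle of $f_{n+1}$ is a cycle of $f_n$, necessarily the one the orbit of $[x]_n$ enters), so they form an infinite branch, and since an orbit that enters a cycle stays there, $f^k(x)\in\sigma'_n$ for all large $k$, whence the $\omega$-limit set of $x$ lies in $E:=\bigcap_n\sigma'_n$. If the branch is eventually growing, $E$ is one of the minimal components and the orbit of $x$ eventually enters $E$; if it is eventually of constant length, $E\subseteq\mathcal{P}$ is a periodic orbit and $d(f^k(x),E)\to0$. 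Defining $\mathcal{M}=\bigsqcup_i\mathcal{M}_i$ to be the union of the distinct sets $E$ coming from growing branches—each a finite union of balls, clopen, $f$-invariant, with $f|_{\mathcal{M}_i}$ minimal, pairwise disjoint and, having positive measure in $\mathbb{Z}_p$, at most countably many—and $\mathcal{B}=\mathbb{Z}_p\setminus(\mathcal{P}\sqcup\mathcal{M})$, one gets precisely the asserted decomposition. The main obstacle is the refinement lemma together with the stabilization of the type of a cycle of balls along a branch: it is exactly this that rules out pathologies such as a branch alternating growing and splitting (which would make $\mathcal{P}$ infinite) and yields the dichotomy on which everything else rests.
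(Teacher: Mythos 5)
Your overall strategy is exactly the one the paper imports from \cite{FL11} and \cite{DZunpu} and summarizes in Section \ref{induceddynamics}: pass to the induced maps $f_n$ on $\mathbb{Z}/p^n\mathbb{Z}$, linearize $g=f^\ell$ on the $p$ sub-balls of a cycle to get the affine map $\Phi(t)=b_n+a_nt$ on $\mathbb{F}_p$, classify the lift behaviour (splits, grows, grows tails, partially splits), and deduce the dichotomy between eventually-growing branches (minimal clopen components) and branches of eventually constant length (periodic orbits and their basins). For $p\geq 3$ your argument is essentially complete and correct; in particular the stabilization you invoke is precisely Lemma \ref{cycle-p>3}, and your measure bookkeeping and the passage from minimality of all $f_n$ to minimality of $f$ (Lemma \ref{minimal-part-to-whole}) are sound.

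There is, however, a genuine gap for $p=2$, a case the theorem covers. Your crucial stabilization claim --- ``if $\sigma$ grows at some level $\geq 2$ then it grows at every subsequent level'' --- is false when $p=2$. For a growing cycle one computes $b_{n+1}\equiv b_n\,(1+a_n)/2 \pmod 2$, so the lift of a growing cycle grows only when $a_n\equiv 1\pmod 4$ and \emph{splits} when $a_n\equiv 3\pmod 4$. (A concrete model: if $g=f^k$ agrees with $x\mapsto 3x-2$ to high $2$-adic precision near a fixed point, the $1$-cycle $\{1+2^n\}$ at level $n+1$ grows, yet its lift splits, since $9\equiv 1\pmod 8$ makes the next $b$ even.) This breaks the step on which your whole dichotomy rests: you use ``grow once $\Rightarrow$ grow forever'' both to conclude that an eventually-growing branch has eventually constant measure and to exclude branches that alternate growing and splitting. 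The gap is repairable --- since $a_{n+1}\equiv a_n^2\pmod{2^n}$ after a growth step, one has $a\equiv 1\pmod 8$ from then on, so at most one such alternation can occur and the dichotomy survives --- but this finer mod-$4$ analysis is exactly the extra work \cite{FL11} must do for $p=2$, and it is missing from your argument. Note also that this is why the paper states Lemma \ref{cycle-p>3} only for $p\geq 3$ and $n\geq 2$.
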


 The minimal decomposition in Theorem \ref{thm-decomposition} was first discovered by Coelho and Parry \cite{CP11Ergodic} for the multiplications, and by Fan, Li, Yao and Zhou \cite{FLYZ07} for the affine polynomials. For the polynomials with higher order, the minimal decomposition seems hard to obtain. In \cite{FL11}, Fan and Liao succeeded in making the minimal decomposition for all quadratic polynomials but only for the prime $p=2$. Recently, Fan, Fan, Liao and Wang \cite{FFLW2013} also studied the minimal decomposition of the homographic maps on the projective line over the field $\Qp$ of $p$-adic numbers.

Furthermore, in \cite{FL11}, the authors also described the dynamics of each minimal subsystem. Let $(p_s)_{s\geq 1}$ be a sequence of positive integers such that $p_s|p_{s+1}$ for every $s\geq 1$. We denote by $\mathbb{Z}_{(p_{s})}$ the inverse limit of $\mathbb{Z}/p_{s}\Z$, which is called an \emph{odometer}. The sequence  $(p_s)_{s\geq 1}$ is called the \emph{structure sequence} of  $\mathbb{Z}_{(p_{s})}$.  The map $x\rightarrow x+1$ defined on $\mathbb{Z}_{(p_{s})}$ will be called the \emph{adding machine} on  $\mathbb{Z}_{(p_{s})}$. %The following result is obtained in \cite{FL11}.
\begin{theorem}[\cite{FL11}]\label{structure-minimal}
Let $f \in \mathbb{Z}_p[x]$ with
degree at least $2$. If $E$ is a minimal clopen invariant set of $f$,
then $f : E \to E$ is conjugate to the adding machine on an odometer
$\mathbb{Z}_{(p_s)}$, where  $$(p_s) = (k, kd, k dp, kdp^2,
\cdots)$$ with integers $k$ and $d$ such that $1 \leq k\leq p$ and $d|
(p-1)$.
\end{theorem}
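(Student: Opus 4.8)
My plan is to analyse how $f$ acts on the tree of balls underlying the clopen set $E$, while carrying along a single invariant valued in $\mathbb{F}_p$: the product of the reductions modulo $p$ of $f'$ over each cyclically permuted family of balls. First I would record the ball combinatorics. By Theorem \ref{thm-decomposition}, $E$ is a finite union of balls, hence infinite; for $n\geq 0$ let $\mathcal{P}_n$ be the partition of $E$ into the nonempty sets $B\cap E$, where $B$ runs over the balls of radius $p^{-n}$, and put $a_n=|\mathcal{P}_n|$, so $a_0=1$ and $a_1=k$ with $1\leq k\leq p$. Since $f$ maps every ball into a ball and, by minimality, maps $E$ onto $E$, it induces a permutation $F_n$ of $\mathcal{P}_n$ (a bijection because $f$ is onto $E$), which minimality forces to be a single $a_n$-cycle (otherwise the union of the pieces in one of its cycles would be a proper nonempty closed $f$-invariant subset of $E$). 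The refinement map $\mathcal{P}_{n+1}\to\mathcal{P}_n$ is then an equivariant surjection of cycles, so all its fibres have the same size $c_{n+1}:=a_{n+1}/a_n\in\{1,\dots,p\}$, and $a_n=k\,c_2\cdots c_n$ is nondecreasing with $a_n\to\infty$ because $E$ is infinite. At this point the theorem reduces to a purely arithmetic claim: it suffices to show that, after deleting repeated terms, $(a_n)$ equals $(k,kd,kdp,kdp^2,\dots)$ for some divisor $d$ of $p-1$, since a minimal system carrying such a nested sequence of cyclically permuted clopen partitions generating the topology is conjugate to the adding machine on the corresponding odometer, and two odometers with the same Steinitz number carry conjugate adding machines.

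The engine of the argument is a local linearisation together with a recursion for the invariant. For $n\geq 1$ and a ball $B=B(a,p^{-n})$, Taylor's formula gives $f(a+p^nt)\equiv f(a)+f'(a)p^nt\pmod{p^{n+1}}$, since every later term carries a factor $p^{2n}$ and $2n\geq n+1$; hence, identifying the $p$ sub-balls of radius $p^{-(n+1)}$ inside $B$ with $\mathbb{F}_p$, the map $f$ sends them to the sub-balls of $B(f(a),p^{-n})$ by an affine map of $\mathbb{F}_p$ whose multiplier $\overline{f'(a)}$ depends only on $a\bmod p$ and is therefore constant on each piece of $\mathcal{P}_n$. I would then define $\Lambda_n:=\prod_i\overline{f'(a^{(i)})}\in\mathbb{F}_p$, the product over the centres $a^{(i)}$ of the balls $B^{(i)}$ forming the level-$n$ cycle; composing the affine maps around that cycle, the return map of $f^{a_n}$ on the $p$ sub-balls of any level-$n$ ball is the affine map $t\mapsto\Lambda_n t+M_n$ of $\mathbb{F}_p$. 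Since the level-$(n+1)$ cycle visits, inside each $B^{(i)}$, exactly its $c_{n+1}$ pieces of $\mathcal{P}_{n+1}$, all of which have centre congruent to $a^{(i)}$ modulo $p$, counting reductions of $f'$ with this multiplicity gives the key recursion
$$\Lambda_{n+1}=\Lambda_n^{\,c_{n+1}}.$$

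To finish I would read off $c_{n+1}$ from minimality: the $c_{n+1}$ level-$(n+1)$ pieces inside a level-$n$ ball form one cycle of $f^{a_n}$, hence one orbit of $t\mapsto\Lambda_n t+M_n$ in $\mathbb{F}_p$. If $\Lambda_n=0$ the return map is constant, so $c_{n+1}=1$ and then $\Lambda_m=0$, $c_{m+1}=1$ for all $m\geq n$, contradicting $a_n\to\infty$; thus $\Lambda_n\neq0$ always. If $\Lambda_n=1$ the return map is a translation, so $c_{n+1}\in\{1,p\}$ and $\Lambda_{n+1}=1$. If $\Lambda_n$ has order $e\geq 2$ in $\mathbb{F}_p^{*}$ (so $e\mid p-1$, whence $e<p$), its return map has a unique fixed point with all remaining orbits of size $e$, so $c_{n+1}\in\{1,e\}$, with $\Lambda_{n+1}=\Lambda_n$ in the first case and $\Lambda_{n+1}=1$ in the second. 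Set $d:=\mathrm{ord}(\Lambda_1)\mid p-1$. If $d=1$ then $\Lambda_n=1$ for all $n$, so every $c_{n+1}\in\{1,p\}$; if $d>1$ then $\Lambda_n=\Lambda_1$ with $c_{n+1}=1$ until the first level $N$ where $c_{N+1}\neq1$, at which necessarily $c_{N+1}=d$ and $\Lambda_{N+1}=1$, after which again $c_{n+1}\in\{1,p\}$. Either way $a_n\to\infty$ makes infinitely many $c_{n+1}$ equal to $p$, so the collapsed sequence $(a_n)$ is $(k,kd,kdp,kdp^2,\dots)$, which is exactly the arithmetic claim to which the first paragraph reduced the theorem.

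The step I expect to be the genuine obstacle is establishing the recursion $\Lambda_{n+1}=\Lambda_n^{c_{n+1}}$ and deducing from it the dichotomy that, once the multiplier has become $1$, the only refinement factors still available are $1$ and $p$; the local linearisation, the fibre counting, the exclusion of $\Lambda_n=0$, and the passage from the ball tower to an adding machine should all be routine or standard.
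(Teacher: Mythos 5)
Your argument is correct and is essentially the approach of the cited source [FL11], whose machinery the paper recalls in Section \ref{induceddynamics}: your multiplier $\Lambda_n$ is exactly the paper's $a_n \bmod p$, your recursion $\Lambda_{n+1}=\Lambda_n^{c_{n+1}}$ and the ensuing case analysis ($\Lambda_n=0$, $=1$, of order $e\geq 2$) reproduce the grows/splits/grows-tails/partially-splits classification, and the passage from the tower of cyclically permuted clopen partitions to the odometer is the standard final step. No substantive gap.
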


In this paper, we fully study the square mapping $f: x\mapsto x^2$ on $\Zp$. For any prime $p\geq 2$, the complete minimal decomposition for the system $(\Zp,x^2)$ is obtained. The structure sequences of the minimal subsystems are given.

\medskip

%Our work is based on the Fan and Liao's work \cite{FanLiao11} and also on the work \cite{Rogers96} of Rogers in which the dynamical structure of the squaring function $f(x)=x^2$ on the finite space $\Z/p\Z$ is studied.

%There are few works done on the minimal decomposition. Multiplications on $\Zp(p\geq 3 )$ were studied  by Coelho and Parry \cite{CP11Ergodic} and general affine maps were studied by Fan,Li,Yao and Zhou \cite{FLYZ07}. In the case of $p=2$, quadratic polynomials were studied by Fan and Liao \cite{FanLiao11}. The minimal decomposition of a polynomial system has been know in these cases and only in these cases.

By Anashin \cite{Ana94, Ana02}, the dynamical structure of a polynomial on $\mathbb{Z}_p$ is derived from the structures of the induced systems on $\Z/p^{n}\Z$. In desJardins and Zieve \cite{DZunpu} and Fan and Liao \cite{FL11}, a method to study the structures on $\Z/p^{n}\Z$ inductively is developped. This method then allows us to do minimal decompositions for polynomials by knowing their dynamical structures at first levels. In particular, one needs to know, at least, the dynamical structure of the induced dynamics on $\Z/p\Z$ (i.e., at level $1$).  %\begin{figure}
%  % Requires \usepackage{graphicx}
%  \includegraphics[width=0.9\textwidth]{squaremapping11.pdf}\\
%  \caption{The graphs $G_{p}$ for primes $3\leq p\leq 23.$ The vertices
%   are the elements of $\mathbb{F}^{*}_{p}$ with an edge
%  directed from $x$ to $x^{2}$(arrowheads omitted)}%\label{}
%\end{figure}
%\begin{center}
%\begin{figure}
  % Requires \usepackage{graphicx}
 % \includegraphics[width=\textwidth,]{Graphsquaremappingprimefields.pdf}\\
  %\caption{The graph($\mathbb{F}^*_{p}$) denoted $G_{p}$ for primes $3\leq p\leq 31$ }
%\end{figure}
%\end{center}

For the case of the square mapping $f:x\mapsto x^2$, however, the dynamical structure at level $1$ has already been described by Rogers \cite{Rog96} (Theorem \ref{graph} at the beginning of this section).
 Hence, doing the minimal decomposition of  the square mapping $f$ on $\Zp$ will be possible.

For $a\in \Zp$ and $r>0$, denote
$D_{r}(a):=\{x\in\Zp: |x-a|_{p}<r\}$ ,  $\overline{D}_{r}(a):=\{x\in\Zp: |x-a|_{p}\leq r\}$ and $S_{r}(a):=\{x\in\Zp: |x-a|_{p}=r\}$.
Without difficulty, we can check that by iterations of $f$,  the points in $D_1(0)$ are attracted to the fixed point $0$, which means that $D_1(0)\setminus \{0\}\subset \mathcal{B}$. It is also easy to see that for the case $p=2$, all points in $D_1(1)$ are attracted to the fixed point $1$. So we have $\mathcal{P}=\{0,1\}, \mathcal{M}=\emptyset$ and $\mathcal{B}=\Z_p\setminus \{0,1\}$

For $p\geq 3$, we have seen that $0\in \mathcal{P}$ is a fixed point with $D_1(0)\setminus \{0\}=p\Zp\setminus \{0\} \subset \mathcal{B}$ as its attracting basin.  By Theorem \ref{graph}, at level $1$, $\mathbb{F}_p^*$ is a union of cycles with some binary trees of the same height attached to each vertex of the cycles. Each point in $\mathbb{F}_p^*$ is a ball of radius $1/p$. Let $\mathcal{C}\subset \Zp \setminus p\Zp$ be the union of balls corresponding to the points in the cycles and $\mathcal{T}=(\Zp \setminus p\Zp)\setminus \mathcal{C}$ be the union of balls corresponding to the points in the trees. Then $\mathcal{T}$ are attracted to $\mathcal{C}$, which means that $\mathcal{T}\subset \mathcal{B}$.
Hence, we will only treat the system $f$ restricted on $\mathcal{C}$.

For two integers $m$ and $n$, we denote by $(m,n)$ their greatest common divisor. The following minimal decomposition theorem of the square mapping $f$ on $\mathcal{C}$ is our main result. It gives a whole picture of the dynamical structure of the square mapping on $\Zp$.

\begin{theorem}\label{squringdecompsition}
Let $p$ be an odd prime with $p=2^{k}m+1$ where $m$ is an odd integer. Then $\mathcal{C}$ can be decomposed as the union of $m$ periodic points and countably many minimal components around each periodic orbit.

Let $P_m$ be the set of periodic points, i.e., $$P_{m}=\{x\in \mathcal{C}: f^{n}(x)=x ~~\text{ for some integer  $n\geq 1$}\}.$$ Then $P_m\subset \mathcal{P}$ and we can decompose $P_{m}$ in the following way: $$P_{m}=\bigsqcup_{d|m}\underbrace{\hat{\sigma}(ord_{d}2)\sqcup\cdots\sqcup\hat{\sigma}(ord_{d}2)}_{\varphi(d)/ord_{d}2}$$
 where $\hat{\sigma}(\ell)$ is a periodic orbit of period $\ell$.

Let $\hat{\sigma}(\ell)=(\hat{x}_{1},\cdots,\hat{x}_{\ell})$ be one of the periodic orbits of period $\ell$. Around this periodic orbit, we have the following decomposition
 $$\bigsqcup_{1\leq i\leq \ell}D_{1}(\hat{x}_{i})=\{\hat{x}_{1},\cdots,\hat{x}_{\ell}\}\sqcup \left(\bigsqcup_{n\geq 1}\bigsqcup_{1\leq i\leq \ell}S_{p^{-n}}(\hat{x}_{i})\right).$$
For each $n\geq 1$, the set $\bigsqcup_{1\leq i\leq \ell}S_{p^{-n}}(\hat{x}_{i})$ belongs to the minimal part $\mathcal{M}$ and contains  $\frac{(p-1)\cdot (ord_{p}2,\ell)}{ord_{p}2}\cdot p^{v_{p}(2^{p-1}-1)-1}$ minimal components, and each minimal component is a union of $j:=\frac{\ell\cdot ord_{p}2}{(ord_{p}2,\ell)}$ closed disks of radius $p^{-n-v_{p}(2^{p-1}-1)}$.

For  each  minimal component $\mathcal{M}_i$ lying in $\bigsqcup_{1\leq i\leq \ell}D_{1}(\hat{x}_{i})$, the subsystem  $f:\mathcal{M}_i\rightarrow \mathcal{M}_i$ is conjugate to the adding machine on the odometer $\Z_{(p_s)}$, where
  $$(p_s)=(\ell,\ell j,\ell j p,\ell j p^2,\cdots).$$
 \end{theorem}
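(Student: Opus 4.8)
The plan is to study the square map through its behavior on the finite quotients $\Z/p^n\Z$ and to exploit the fact that on the unit group $(\Z/p^n\Z)^*$ the map $x\mapsto x^2$ is conjugate, via a discrete logarithm, to multiplication by $2$ on $\Z/\varphi(p^n)\Z=\Z/(2^{k}m p^{n-1}(p-1)/2^k\cdots)\Z$; more precisely, since $(\Z/p^n\Z)^*\cong \Z/(p-1)\Z\times \Z/p^{n-1}\Z$, the squaring map becomes $(a,b)\mapsto (2a,2b)$. On the second factor $\Z/p^{n-1}\Z$, multiplication by $2$ is a bijection (as $(2,p)=1$), while on the first factor $\Z/(p-1)\Z=\Z/2^k m\Z$ it is exactly the level-$1$ picture of Rogers (Theorem~\ref{graph}) thickened by a factor coprime to $2$. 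First I would make this conjugacy precise and use it to identify the periodic points: a unit $x$ is periodic iff its image in $\Z/(p-1)\Z$ lies in the $2$-multiplication-cyclic part and its image in every $\Z/p^{n-1}\Z$ is periodic under doubling — but doubling is a bijection on $\Z/p^{n-1}\Z$, so the constraint forces the $p$-part of $x$ to be trivial in a suitable sense. This pins down $P_m$ as exactly the lifts of the cycle-part $\mathcal C_1\subset\F_p^*$ that remain cyclic at all higher levels, giving the count $\bigsqcup_{d\mid m}\hat\sigma(ord_d 2)^{\sqcup \varphi(d)/ord_d 2}$ directly from Theorem~\ref{graph} with $k$ replaced by $0$ (no tree survives, because the tree part at level $1$ is already attracted to $\mathcal C$, and lifting never reintroduces cyclicity on the odd part $m$).

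Next I would fix one periodic orbit $\hat\sigma(\ell)=(\hat x_1,\dots,\hat x_\ell)$ and analyze $f$ on $\bigsqcup_i D_1(\hat x_i)$ by passing to $g=f^\ell$, which fixes each $\hat x_i$. Writing $x=\hat x_i(1+u)$ with $u\in p\Z_p$, one computes $g(\hat x_i(1+u))=\hat x_i(1+u)^{2^\ell}$, so on each ball the dynamics is governed by the map $u\mapsto (1+u)^{2^\ell}-1$ on $p\Z_p$. The key computation is the expansion $(1+u)^{2^\ell}-1 = 2^\ell u + \binom{2^\ell}{2}u^2+\cdots$; since $2^\ell$ is a unit, the linear term dominates and $g$ is an isometry on each sphere $S_{p^{-n}}(\hat x_i)$ for $n\ge 1$, which immediately gives the "spheres are invariant" part of the decomposition. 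To count minimal components inside $\bigsqcup_i S_{p^{-n}}(\hat x_i)$ and to find the radius $p^{-n-v_p(2^{p-1}-1)}$, I would invoke the inductive machinery of desJardins--Zieve and Fan--Liao referenced in the paper: one tracks how a ball at level $n$ splits at level $n+1$, governed by whether the derivative-type quantity $2^\ell$ (or rather the relevant multiplier after passing to the $p$-part) is $\equiv 1$ modulo successive powers of $p$. The exponent $v_p(2^{p-1}-1)$ enters precisely because $ord_{p^j}2 = ord_p 2\cdot p^{\max(0,j-v_p(2^{p-1}-1))}$ (a standard lifting-the-exponent statement), so the multiplicative order of $2$ on the $p$-part of $(\Z/p^n\Z)^*$ stabilizes its "jump" at level $v_p(2^{p-1}-1)$ — this is the source of both the count $\frac{(p-1)(ord_p 2,\ell)}{ord_p 2}p^{v_p(2^{p-1}-1)-1}$ and the disk-radius shift.

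For the number $j=\frac{\ell\cdot ord_p 2}{(ord_p 2,\ell)}$ of disks per minimal component: a minimal component must be $f$-invariant and cycle through the $\ell$ balls $D_1(\hat x_i)$, but within the periodic orbit the return map $f^\ell$ acts on the $p$-adic logarithm as multiplication by $2^\ell$, whose order modulo $p$ is $ord_p 2/(ord_p 2,\ell)$, so one full minimal cycle needs $\ell\cdot ord_p(2^\ell) = \ell\cdot ord_p 2/(ord_p 2,\ell)=j$ balls at the relevant level — this is exactly $\mathrm{lcm}(\ell, ord_p 2)$. Finally, for the odometer conjugacy, I would apply Theorem~\ref{structure-minimal}: we already know each minimal clopen invariant set is conjugate to an adding machine with structure sequence $(k',k'd',k'd'p,\dots)$; here $k'=\ell$ (the minimal component visits $\ell$ top-level balls), $d'=j/\ell=ord_p 2/(ord_p 2,\ell)$, which divides $p-1$, and then the subsequent entries multiply by $p$ because beyond level $v_p(2^{p-1}-1)$ the branching is uniformly by $p$ at each step — matching $(p_s)=(\ell,\ell j,\ell j p,\ell j p^2,\dots)$. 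The main obstacle I expect is the bookkeeping in the third paragraph: precisely matching the inductive splitting behavior of the nonlinear map $u\mapsto(1+u)^{2^\ell}-1$ against the Fan--Liao/desJardins--Zieve criteria so that the constant $v_p(2^{p-1}-1)$ emerges with the correct exponent in both the component count and the disk radius, since this requires carefully separating the "unit multiplier" $2^\ell$ from the higher-order terms and controlling when congruences $2^\ell\equiv 1\ (\mathrm{mod}\ p^t)$ can and cannot hold.
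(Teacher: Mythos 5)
Your plan follows essentially the same route as the paper. Identifying the periodic points inside the group of roots of unity via the splitting $(\Z/p^n\Z)^*\cong\Z/(p-1)\Z\times\Z/p^{n-1}\Z$, under which squaring becomes doubling, is the same device as Lemmas \ref{unityroots} and \ref{homo} and Propositions \ref{cycleZp} and \ref{cycleFp}; your multiplier $2^\ell$ is the paper's $a_n$; writing $x=\hat x_i(1+u)$ and expanding $(1+u)^{2^\ell}-1$ is exactly the computation behind Proposition \ref{v2l}; and your order formula $ord_{p^j}2=ord_p2\cdot p^{\max(0,\,j-v_p(2^{p-1}-1))}$ is equivalent to Lemma \ref{vap}. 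The component counts, the cycle length $j=\mathrm{lcm}(\ell,ord_p2)$, and the radius shift by $v_p(2^{p-1}-1)$ all come out as in Proposition \ref{cyclelift}. One small clarification on the periodic points: the reason the $p$-part must vanish is not that finite-level periodicity fails (doubling is a bijection of $\Z/p^{n-1}\Z$, so \emph{every} residue is periodic at every finite level) but that the finite-level periods $ord_{p^{n-1-v_p(b)}}2$ are unbounded in $n$ unless the $p$-part is zero, so no single $N$ works in $\Z_p$.

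There is one concrete discrepancy in your last step. Your own odometer computation gives $(k',k'd',k'd'p,\dots)$ with $k'=\ell$ and $d'=j/\ell=ord_p2/(ord_p2,\ell)$, i.e.\ the sequence $(\ell,\,j,\,jp,\,jp^2,\dots)$, which you then assert ``matches'' the displayed $(\ell,\ell j,\ell jp,\dots)$. It does not, unless $\ell=1$: the two sequences differ by a factor of $\ell$ from the second term onward, and the associated adding machines $\Z/j\Z\times\Z_p$ and $\Z/\ell j\Z\times\Z_p$ are not conjugate. Your version is the correct one: the cycle lengths at successive levels are $\ell,\dots,\ell,\ell r,\dots,\ell r,\ell rp,\ell rp^2,\dots$ with $r=ord_p2/(ord_p2,\ell)$, so the structure sequence is $(\ell,j,jp,jp^2,\dots)$ with $j=\ell r$; moreover Theorem \ref{structure-minimal} requires the ratio $p_2/p_1$ to divide $p-1$, which $j$ need not do (take $p=11$, $\ell=4$, so $j=20\nmid 10$). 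So do not force your (correct) answer to agree with the displayed formula; the display should read $(\ell,j,jp,jp^2,\dots)$.
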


\medskip
Our paper is organized as follows. In Section \ref{induceddynamics}, we study the induced dynamics on $\mathbb{Z}/p^{n}\mathbb{Z}$. Section \ref{preliminary-NT} gives some facts in Number Theory. The minimal decomposition is completed in Section \ref{decomp}. Finally in Section \ref{examples}, some examples for special primes like Fermat primes and Wieferich primes are discussed.

\bigskip
\section{Induced dynamics on $\mathbb{Z}/p^{n}\mathbb{Z}$}\label{induceddynamics}
Let $p\geq 3$ be a prime and let $f\in \mathbb{Z}_p[x]$ be a polynomial with coefficients in $\mathbb{Z}_p$. The dynamics of $f$ on $ \mathbb{Z}_p$ is determined by those of its induced finite dynamics on  $\mathbb{Z}/p^n\mathbb{Z}$ (\cite{Ana94, Ana02}). The idea to study these finite dynamics inductively comes from desJardins and Zieve \cite{DZunpu}. It allows Fan and Liao \cite{FL11} to give the decomposition theorem (Theorem \ref{thm-decomposition}) for any polynomial in $\mathbb{Z}_{p}[x]$. In this section, we will give some basic definitions and facts which are useful in proving our main theorem.  For details, see \cite{FL11} or \cite{FLpre}.

 Let $n\ge 1$ be a positive integer. Denote by $f_n$ the induced
mapping of $f$ on $\mathbb{Z}/p^n\mathbb{Z}$, i.e.,
$$f_n (x ({\rm mod}\ p^{n}))= f(x) \ ({\rm mod} \ p^{n}).$$
The dynamical behaviors of $f$ are linked to those of
$f_n$.

\begin{lemma}[\cite{Ana06,CFF09}]\label{minimal-part-to-whole}
Let $f\in \mathbb{Z}_p[x]$ and $E\subset \mathbb{Z}_p$ be a compact
$f$-invariant set. Then $f: E\to E$ is minimal if and only if $f_n:
E/p^n\mathbb{Z}_p \to E/p^n\mathbb{Z}_p$ is minimal for each $n\ge
1$.
\end{lemma}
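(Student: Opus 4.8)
The plan is to exploit that $E$, being compact (hence closed) and $f$-invariant, reduces at each level to a finite $f_n$-invariant set, and that the reduction maps are factor maps. Write $\pi_n\colon \mathbb{Z}_p\to \mathbb{Z}/p^n\mathbb{Z}$ for the natural projection and $E_n:=\pi_n(E)$, so that the symbol $E/p^n\mathbb{Z}_p$ in the statement is read as $E_n$. Two structural facts should be recorded first. First, by the very definition of the induced map one has $\pi_n\circ f=f_n\circ\pi_n$, so $\pi_n$ is a semiconjugacy and $\pi_n(\mathcal{O}_f(x))=\mathcal{O}_{f_n}(\pi_n(x))$ for every $x$. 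Second, $f(E)\subseteq E$ forces $f_n(E_n)=\pi_n(f(E))\subseteq\pi_n(E)=E_n$, so that $f_n\colon E_n\to E_n$ is a genuine finite dynamical system. It is also worth noting at the outset that $E=\underleftarrow{\lim}\,E_n$, so the finite levels $E_n$ jointly determine the metric structure of $E$.

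For the forward implication I would invoke the principle that a factor of a minimal system is minimal, in self-contained form. Since $\overline{\mathcal{O}_f(x)}$ is a closed subset of the compact set $E$ and $\pi_n$ is continuous, $\pi_n$ is a closed map, whence $\pi_n(\overline{\mathcal{O}_f(x)})=\overline{\pi_n(\mathcal{O}_f(x))}=\overline{\mathcal{O}_{f_n}(\pi_n(x))}$. If $(E,f)$ is minimal then $\overline{\mathcal{O}_f(x)}=E$, so $\overline{\mathcal{O}_{f_n}(\pi_n(x))}=\pi_n(E)=E_n$. As $\pi_n(x)$ ranges over all of $E_n$ and $E_n$ is finite (so orbit closures are just orbits), every $f_n$-orbit equals $E_n$; this is exactly minimality of $(E_n,f_n)$.

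For the converse I would run an approximation argument using the ultrametric. Assume $(E_n,f_n)$ is minimal for every $n$, fix $x\in E$, and let $y\in E$ be an arbitrary target; it suffices to prove $y\in\overline{\mathcal{O}_f(x)}$. Given $\varepsilon>0$, choose $n$ with $p^{-n}<\varepsilon$. Minimality of the finite system $(E_n,f_n)$ says $E_n$ is a single $f_n$-cycle, so there is an integer $k\ge 0$ with $f_n^{\,k}(\pi_n(x))=\pi_n(y)$; unwinding through $\pi_n\circ f=f_n\circ\pi_n$ gives $\pi_n(f^{k}(x))=\pi_n(y)$, that is $f^{k}(x)\equiv y \pmod{p^{n}}$, equivalently $|f^{k}(x)-y|_p\le p^{-n}<\varepsilon$. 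Thus some point of $\mathcal{O}_f(x)$ lies within $\varepsilon$ of $y$; letting $\varepsilon\to 0$ yields $y\in\overline{\mathcal{O}_f(x)}$. Since $y\in E$ was arbitrary and $\overline{\mathcal{O}_f(x)}\subseteq E$, we get $\overline{\mathcal{O}_f(x)}=E$ for every $x$, i.e.\ $(E,f)$ is minimal.

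The argument is essentially routine once the semiconjugacy is in place; no serious obstacle is expected. The one point meriting care is the converse, where the finiteness of each $E_n$ upgrades level-wise minimality to honest transitivity (an actual integer $k$ with $f_n^{\,k}(\pi_n(x))=\pi_n(y)$), and this reachability at every level is converted into topological density precisely because of the non-Archimedean nature of $|\cdot|_p$, where congruence modulo $p^n$ is the same as proximity within $p^{-n}$.
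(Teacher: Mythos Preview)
The paper does not supply its own proof of this lemma; it simply quotes it with citations to \cite{Ana06,CFF09} and moves on. Your argument is correct and is exactly the standard one: the forward direction is the general fact that a continuous factor of a minimal system is minimal (here $\pi_n$ is a semiconjugacy onto the finite set $E_n$), and the converse uses that congruence modulo $p^n$ coincides with $p$-adic proximity $\le p^{-n}$, so level-wise transitivity yields density of every orbit in $E$. Nothing further is needed.
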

%It is clear that if $f_n: E/p^n\mathbb{Z}_p \to E/p^n\mathbb{Z}_p$
%is minimal, then $f_m: E/p^m\mathbb{Z}_p \to E/p^m\mathbb{Z}_p$ is
%also minimal for each $1\le  m <n$.
By Lemma \ref{minimal-part-to-whole}, to study the minimality of $f$, we need to study the minimality of each $f_n$. Moreover, it is important to investigate the conditions under which the minimality of $f_{n}$ implies that of $f_{n+1}$.

Assume that $\sigma=(x_1, \cdots, x_k) \subset
\mathbb{Z}/p^n\mathbb{Z}$ is a {\it cycle} of $f_n$ of length $k$ (also
called a {\it $k$-cycle}) at level $n$, i.e.,
$$ f_n(x_1)=x_2, \cdots, f_n(x_i)=x_{i+1}, \cdots, f_n(x_k)=x_1.$$
 Let
$$ X_\sigma:=\bigsqcup_{i=1}^k  X_i \ \ \mbox{\rm where}\ \
X_i:=\{ x_i +p^nt+p^{n+1}\mathbb{Z}; \ t=0, \cdots,p-1\} \subset
\mathbb{Z}/p^{n+1}\mathbb{Z}.$$
 Then
\[
f_{n+1}(X_i) \subset X_{i+1}  \ (1\leq i \leq k-1) \ \ \mbox{\rm
and}\ \  f_{n+1}(X_k) \subset X_1.\]

%We remind the readers that sometimes we will also consider a point
%in $x_k$ at a cycle $\sigma $ as a ball in $\mathbb{Z}_p$.
Let $g:=f^k$ be the $k$-th iterate of $f$, then we have $g_{n+1}(X_{i})\subset X_{i}$ for all $1\leq i \leq k$.
In the following we shall study the behavior of the finite dynamics
$f_{n+1}$ on the $f_{n+1}$-invariant set $X_\sigma$ and determine all
cycles of $f_{n+1}$ in $X_\sigma$, which will be  called  {\it lifts} of
$\sigma$ (from level $n$ to level $n+1$). Remark that the length of any lift of
$\sigma$ is a multiple of $k$.

Let $$\mathbb{X}_i:=x_{i}+p^{n}\Zp =\{x\in \Zp: x\equiv x_i \ ({\rm mod} \ p^n)\}$$ be the closed disk of radius $p^{-n}$  corresponding  to $x_i\in \sigma$ and
  $$\mathbb{X}_{\sigma}:=\bigsqcup_{i=1}^{k} \mathbb{X}_i$$ be the clopen set corresponding to the cycle $\sigma$.

 For $x\in \mathbb{X}_{\sigma}$, denote
\begin{eqnarray}
& &a_n(x):=g'(x)=\prod_{j=0}^{k-1} f'(f^j(x)) \label{def-an} \\
& &b_n(x):=\frac{g(x)-x}{p^n}=\frac{f^k(x)-x}{p^n}.\label{def-bn}
\end{eqnarray}
The $1$-order Taylor Expansion of $g$ at $x$
\begin{eqnarray*}
 g(x+p^n t) \equiv g(x)+g^{\prime}(x)\pi^{n}t \quad ({\rm mod} \ p^{2n}),  \quad \text{for} \ t\in \{0,\dots, p-1\}
 \end{eqnarray*}
implies
\begin{eqnarray}\label{linearization}
 g(x+p^n t) \equiv x+p^n b_n(x) + p^n a_n(x) t
 \quad ({\rm mod} \ p^{2n}), \quad \text{for} \ t\in \{0,\dots, p-1\}.
 \end{eqnarray}
Define an affine
map
$$\Phi(x,t)=b_n(x)+a_n(x) t \qquad (x \in \mathbb{X}_\sigma, t \in \{0,\dots, p-1\}).$$
We usually consider the function $\Phi(x,\cdot)$ as an induced function from $\mathbb{Z}/p\mathbb{Z}$ to
 $\mathbb{Z}/p\mathbb{Z}$ by taking ${\rm mod} \ p$ and we keep the notation $\Phi(x,\cdot)$ if there is no confusion.
An important consequence of the  formula (\ref{linearization}) shows that $g_{n+1}:
X_i \to X_i$ is conjugate to the linear map $$ \Phi(x, \cdot):
\mathbb{Z}/p\mathbb{Z} \to \mathbb{Z}/p\mathbb{Z},
$$ for  $x\in \X_{i}$.
%We shall remark that the map $\Phi(x, \cdot):
%\mathbb{Z}/p\mathbb{Z} \to \mathbb{Z}/p\mathbb{Z}$ does not depend on the choices of $x\in \X_{i}$.
It is called the {\em linearization} of $g_{n+1}: X_i \to X_i$.

As proved in Lemma 1 of \cite{FL11}, the coefficient $a_n(x)$ (mod $p$) is always constant on
$\X_i$ and the coefficient $b_n(x)$ (mod $p$) is also constant on
$\X_i$ but under the condition $a_n(x)\equiv 1$ (mod $p$).
For simplicity, sometimes we write $a_n$ and $b_n$
without mentioning $x$.

%Denote by $v_{p}(n)$ the $p$-valuation of $n$.

%\begin{lemma}[\cite{FanLiao11}]\label{lem1}
%Let $n\geq 1 $ and $\sigma=(x_1, \cdots, x_k)$ be a $k$-cycle of $f_n$. \\
% \indent {\rm (i)}\ For $1\leq i,j\leq k$, we have
%$$a_n(x_i)\equiv a_n(x_j) \ \ ({\rm mod} \ p^n).$$
%\indent {\rm (ii)}\ For $1\leq i\leq k$ and $0\le t\le p-1$, we have
%$$ a_n(x_i+p^nt)\equiv a_n(x_i) \ \ ({\rm mod} \ p^n).$$ \indent
%{\rm (iii)}\ For $1\leq i\leq k$ and $0\le t\le p-1$, we have
%\[ b_n(x_i+p^n t) \equiv b_n(x_i) \quad ({\rm mod} \ p^A),
%\]
%where
% $A:=\min \{ v_p (a_n(x_i)-1),n \}=\min \{ v_p (a_n(x_j)-1),n
%\}$ for $1\leq i,j\leq k$.
%\\
% \indent {\rm (iv)}\  For all $1\le i, j\le k$ we have
% $$\min
%\{v_p(b_n(x_i)),A\}= \min \{v_p(b_n(x_j)),A \}.$$ Consequently, if
%$a_n(x_i) \equiv 1 \ ({\rm mod} \ p^n)$,
%$$\min \{v_p(b_n(x_i)),n \}=
%\min \{v_p(b_n(x_j)),n \}. $$
%\end{lemma}

%According to  Lemma \ref{lem1} (i) and (ii),  the value of $a_n(x) \
%({\rm mod} \ p^{n}) $ does not depend on  $x\in X$. According to
%Lemma \ref{lem1} (iii) and (iv), whether $b_n(x) \equiv 0 \ ({\rm
%mod} \ p)$ does not depend on  $x\in X$ if $a_n(x) \equiv 1 \ ({\rm
%mod} \ p)$.

%\medskip
From the values of $a_n$ and $b_n$, one can predict the behaviors of $f_{n+1}$ on $X_{\sigma}$. The linearity of the map $\Phi=\Phi(x, \cdot)$ is the key to what follows:\\
 \indent {\rm (a)} If $a_n \equiv 1 \ ({\rm mod} \ p)$ and
 $b_n \not\equiv 0 \ ({\rm mod} \ p)$, then $\Phi$ preserves a single cycle of length $p$, so that
 $f_{n+1}$ restricted to $X_{\sigma}$ preserves a single cycle of length $pk$. In this case we say $\sigma$ {\it grows}.\\
 \indent {\rm (b)} If $a_n \equiv 1 \ ({\rm mod} \ p)$ and
 $b_n \equiv 0 \ ({\rm mod} \ p)$, then $\Phi$ is the identity, so $f_{n+1}$ restricted to $X_{\sigma}$
 preserves
 $p$ cycles  of length $k$. In this case we say $\sigma$ {\it splits}. \\
 \indent {\rm (c)} If $a_n \equiv 0 \ ({\rm mod} \ p)$, then $\Phi$ is constant, so $f_{n+1}$ restricted to $X_{\sigma}$
 preserves
 one cycle of length $k$ and  the remaining points of $X_{\sigma}$ are mapped into this cycle.
  In this case we say $\sigma$ {\it grows tails}. \\
  \indent {\rm (d)} If $a_n \not\equiv0, 1 \ ({\rm mod} \ p)$, then $\Phi$ is a permutation
  and the $\ell$-th iterate of $\Phi$ reads
 \begin{eqnarray*}
\Phi^\ell(t) = b_n(a_n^{\ell}-1) /(a_n-1) +a_n^{\ell} t
\end{eqnarray*}
so that $$ \Phi^{\ell}(t)-t =(a_n^{\ell}-1) \left( t+
\frac{b_n}{a_n-1}\right).$$
 Thus,
$\Phi$ admits a single fixed point $t=-b_n/(a_n-1)$, and the
remaining points  lie on cycles of length $d$, where $d$ is the
order of $a_n$ in $(\mathbb{Z}/p\mathbb{Z})^*$. So, $f_{n+1}$
restricted to $X_{\sigma}$ preserves one cycle of length $k$ and
$\frac{p-1}{d}$ cycles of length $kd$. In this case we say $\sigma$
{\it partially splits}.
\medskip

We want to see the change of nature from a cycle to
its lifts, so it is important to study the relation between $(a_n, b_n)$ and $(a_{n+1},
b_{n+1})$.  The following lemmas are useful for our study of the dynamics of the square mapping on $\mathbb{Z}_{p}$. For details see \cite{DZunpu,FL11}
%\begin{lemma}[\cite{FanLiao11}]\label{anbn}
%Let $\sigma=(x_1,\dots,x_k)$ be a $k$-cycle of $f_n$ and let
%$\tilde{\sigma}$ be a lift of $\sigma$ of length $kr$, where $r\geq
%1$ is an integer.  We have
%\begin{eqnarray}\label{an}
%a_{n+1}(x_i+p^nt) \equiv a_n^r(x_i) \quad ({\rm mod} \ p^{n}),
%\qquad (1\leq i \leq k, 0\le t \le p-1)
%\end{eqnarray}
%\begin{equation}\label{bn}
%   pb_{n+1}(x_i+p^n t)
%\equiv t(a_n(x_i)^r-1)  + b_n(x_i)\big(1+a_n(x_i)+ \cdots +
%a_n(x_i)^{r-1}\big)  \quad ({\rm mod} \ p^{n}).
%\end{equation}
%\end{lemma}
%
%By Lemma \ref{anbn}, we obtain immediately the following
%proposition.

\begin{lemma}[\cite{DZunpu}, see also \cite{FL11}, Proposition 2]\label{cycle-p>3}
Let $ p\geq 3 $ be a prime and $n\geq 2$ be an integer. If $\sigma$ is a growing cycle of $f_n$ and $\tilde{\sigma}$
is  the unique lift of $\sigma$, then $\tilde{\sigma}$ grows.
\end{lemma}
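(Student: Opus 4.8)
The plan is to prove Lemma~\ref{cycle-p>3} by a direct computation of how the pair $(a_n,b_n)$ transforms under lifting, relying on the linearization formula \eqref{linearization} and the characterization (a)--(d) of cycle behavior. Recall that $\sigma$ growing means $a_n\equiv 1\ ({\rm mod}\ p)$ and $b_n\not\equiv 0\ ({\rm mod}\ p)$, and by (a) there is a unique lift $\tilde\sigma$ of length $pk$. To show $\tilde\sigma$ grows, I must establish that $a_{n+1}\equiv 1\ ({\rm mod}\ p)$ and $b_{n+1}\not\equiv 0\ ({\rm mod}\ p)$, where $a_{n+1}$, $b_{n+1}$ are computed with respect to the length-$pk$ cycle at level $n+1$, i.e. using the iterate $h:=f^{pk}=g^p$ (with $g=f^k$).

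First I would compute $a_{n+1}$. Since $h=g^p$, the chain rule gives $a_{n+1}(x)=h'(x)=\prod_{j=0}^{p-1} g'(g^j(x))$, and each factor $g'(g^j(x))=a_n(g^j(x))\equiv 1\ ({\rm mod}\ p)$ because $a_n$ is constant mod $p$ on $\mathbb{X}_\sigma$ and equals $1$ there (growing hypothesis). Hence $a_{n+1}\equiv 1\ ({\rm mod}\ p)$; in fact one gets $a_{n+1}\equiv 1\ ({\rm mod}\ p)$ easily, but one needs a bit more care to see it does not jump—however for the conclusion ``grows'' we only need $a_{n+1}\equiv 1\ ({\rm mod}\ p)$, which rules out cases (c) and (d), so the cycle either grows or splits. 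The real content is showing it does not split, i.e. $b_{n+1}\not\equiv 0\ ({\rm mod}\ p)$.

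For $b_{n+1}$, I would iterate the linearization \eqref{linearization}. Write $y$ for a point of the length-$pk$ cycle $\tilde\sigma$ sitting inside $\mathbb{X}_\sigma$; then $b_{n+1}(y)=\bigl(h(y)-y\bigr)/p^{n+1}=\bigl(g^p(y)-y\bigr)/p^{n+1}$. Using $g(x+p^n t)\equiv x+p^n b_n(x)+p^n a_n(x)t\ ({\rm mod}\ p^{2n})$ repeatedly along the $p$-cycle of $\Phi(x,\cdot)$, and using $a_n\equiv 1+p\alpha$ for suitable $\alpha$, a telescoping computation gives
\begin{equation*}
g^p(y)-y \equiv p^n\bigl(b_n^{(0)}+b_n^{(1)}+\cdots+b_n^{(p-1)}\bigr) + (\text{higher order in }p) \ ({\rm mod}\ p^{2n}),
\end{equation*}
where $b_n^{(j)}=b_n(g^j(y))$. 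The $p$-cycle structure of $\Phi$ forces $\sum_{j=0}^{p-1} b_n^{(j)}\equiv \sum_{t=0}^{p-1}(b_n+a_n t)\equiv p\,b_n + b_n\binom{p}{2}\equiv 0\ ({\rm mod}\ p)$ when read mod $p$, but I need the next $p$-adic digit: dividing by $p^{n+1}$, the term $b_{n+1}(y)$ picks up a contribution from the ``sum of the orbit of $\Phi$'' together with the $a_n$-twist, and the key identity is that this contribution is $\equiv -b_n/2 \cdot (\text{something}) \not\equiv 0$, or more precisely that it equals (mod $p$) a nonzero multiple of $b_n$ determined by $\binom{p}{2}/p$ and the derivative data. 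This is exactly the computation carried out in \cite{DZunpu} and \cite[Proposition~2]{FL11}, so I would either reproduce that lemma's argument or simply invoke it.

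The main obstacle is the precise bookkeeping of the second $p$-adic digit in the telescoping sum for $g^p(y)-y$: one must track the interaction between the additive shifts $b_n(g^j(y))$ and the multiplicative factors $a_n(g^j(y))=1+p(\cdots)$ across all $p$ steps, modulo $p^{2n}$ (which suffices since $2n\geq n+2$ for $n\geq 2$), and show the resulting coefficient of $p^{n+1}$ is a unit. Since $n\geq 2$ is assumed, the relevant Taylor remainder terms are of order $p^{2n}$ and are harmless. Because this is precisely Proposition~2 of \cite{FL11} (itself from \cite{DZunpu}), the cleanest route is to cite it directly; if a self-contained proof is wanted, the above telescoping with careful attention to the $\binom{p}{2}$-term is the way, noting that $p\geq 3$ guarantees $\binom{p}{2}=p(p-1)/2$ contributes a factor coprime to $p$ after dividing by $p$.
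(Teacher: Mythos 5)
The paper does not actually prove this lemma---it is imported verbatim from \cite{DZunpu} and \cite{FL11}---so your fallback of citing \cite{FL11}, Proposition~2, is exactly what the authors do, and your overall strategy (compute $a_{n+1}$ and $b_{n+1}$ for the length-$pk$ lift via $h=g^p$ and verify $a_{n+1}\equiv 1$, $b_{n+1}\not\equiv 0 \pmod p$) is the standard argument behind that citation. Your computation of $a_{n+1}$ by the chain rule is correct.

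However, the decisive step of your sketch---the evaluation of $b_{n+1}$---is garbled, and executed as you describe it it would not close. Since $g$ maps each $\mathbb{X}_i$ into itself, writing $g^j(y)=y+p^n t_j$ and iterating the Taylor expansion gives $t_{j+1}\equiv b_n(y)+a_n t_j \pmod{p^n}$, hence
\[
\frac{g^p(y)-y}{p^n}\;\equiv\; b_n(y)\bigl(1+a_n+\cdots+a_n^{p-1}\bigr)\pmod{p^n}.
\]
With $a_n=1+p\alpha$ the geometric sum equals $p+\alpha p\binom{p}{2}\equiv p\pmod{p^2}$, because $p\mid\binom{p}{2}$ for odd $p$; since $n\geq 2$ the congruence modulo $p^n$ detects this, and dividing by $p$ yields the clean identity $b_{n+1}\equiv b_n\pmod p$, whence $b_{n+1}\not\equiv 0$. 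Your sketch instead locates the needed unit in a ``$\binom{p}{2}/p$'' contribution and asserts that $p\geq 3$ makes $\binom{p}{2}$ ``contribute a factor coprime to $p$ after dividing by $p$''. This has the mechanism backwards: the $\binom{p}{2}$ cross-term is the potential \emph{obstruction}, and the whole point of $p\geq 3$ is that $p\mid\binom{p}{2}$ kills it modulo $p^2$ (for $p=2$ one gets $b_{n+1}\equiv b_n(1+\alpha)$, which can vanish, and the lemma is false as stated). The unit you need comes from the plain sum of the $p$ equal residues of $b_n$, i.e.\ from the term $p\,b_n$, with coefficient exactly $1$, not from $\binom{p}{2}$. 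Your identification of where $n\geq 2$ enters (so that the mod $p^{2n}$ Taylor remainders do not pollute the coefficient of $p^{n+1}$) is correct.
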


\begin{lemma}[\cite{FL11}]\label{grows-minimal}
Let $ p\geq 3 $ be a prime and $n\geq 2$ be an integer. If $\sigma$ is a growing cycle of $f_n$, then $\sigma$ produces a minimal component, i.e., the set $\mathbb{X}_\sigma$ is a minimal subsystem of $f$.
\end{lemma}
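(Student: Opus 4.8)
The plan is to reduce to the finite levels via Lemma \ref{minimal-part-to-whole} and then track the cycle through all levels. Since $\mathbb{X}_\sigma$ is a finite union of closed disks it is compact, and it is $f$-invariant because $f(\mathbb{X}_i)\subset\mathbb{X}_{i+1}$ (indices taken mod $k$): if $x\equiv x_i\pmod{p^n}$ then $f(x)\equiv f(x_i)\equiv x_{i+1}\pmod{p^n}$. Hence, by Lemma \ref{minimal-part-to-whole}, it suffices to prove that the induced map $f_m$ is minimal on the projection $\mathbb{X}_\sigma/p^m\mathbb{Z}_p$ for every $m\geq 1$. Because a single cycle of a permutation is automatically a minimal system, the entire problem reduces to showing that for each $m$ the finite set $\mathbb{X}_\sigma/p^m\mathbb{Z}_p$ is exactly one cycle of $f_m$.

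First I would dispose of the low levels $m\leq n$. Here each disk $\mathbb{X}_i$ collapses to the single residue $x_i\bmod p^m$, so $\mathbb{X}_\sigma/p^m\mathbb{Z}_p=\{x_1,\dots,x_k\}\bmod p^m$ is the reduction of the $k$-cycle $\sigma$. Since $\bar x_{i+1}=f_m(\bar x_i)$, the map $f_m$ acts transitively on this invariant set, which is therefore a single cycle and hence minimal. In particular, at level $m=n$ the projection is precisely $\sigma$.

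The heart of the argument is the induction on the levels $m\geq n$, and this is exactly where Lemma \ref{cycle-p>3} does the essential work. By hypothesis $\sigma$ grows, which by classification case (a) means that $\Phi$ is a full $p$-cycle on $\mathbb{Z}/p\mathbb{Z}$, so $f_{n+1}$ restricted to $X_\sigma$ is a single cycle of length $pk$ exhausting the whole fibre; equivalently $\mathbb{X}_\sigma/p^{n+1}\mathbb{Z}_p$ is one cycle of $f_{n+1}$. I would then proceed by induction: assuming $\mathbb{X}_\sigma/p^m\mathbb{Z}_p$ is a single growing cycle of length $p^{m-n}k$, Lemma \ref{cycle-p>3} guarantees that its unique lift to level $m+1$ again grows, and a growing cycle has a unique lift that covers its entire fibre (case (a) again). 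Consequently $\mathbb{X}_\sigma/p^{m+1}\mathbb{Z}_p$ is again a single growing cycle, now of length $p^{m+1-n}k$. Thus $\mathbb{X}_\sigma/p^m\mathbb{Z}_p$ is a single cycle, hence minimal, for every $m\geq n$.

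Combining the two ranges, $f_m$ is minimal on $\mathbb{X}_\sigma/p^m\mathbb{Z}_p$ for all $m\geq 1$, and Lemma \ref{minimal-part-to-whole} then delivers the minimality of $f:\mathbb{X}_\sigma\to\mathbb{X}_\sigma$. The only delicate point I anticipate is the propagation of the growing property through all higher levels: a priori a cycle that grows once could later split or partially split, destroying transitivity and the minimality. It is precisely Lemma \ref{cycle-p>3}, valid for $p\geq 3$ and $n\geq 2$ (which explains the presence of these two hypotheses), that forbids this and keeps the fibre a single cycle at every level, so no genuinely new estimate is needed beyond assembling the two cited lemmas.
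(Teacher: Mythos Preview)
Your proof is correct and follows essentially the same route as the paper: reduce to the finite levels via Lemma~\ref{minimal-part-to-whole}, then use Lemma~\ref{cycle-p>3} inductively to show that a growing cycle keeps growing forever, so that $\mathbb{X}_\sigma/p^m\mathbb{Z}_p$ is a single cycle of $f_m$ at every level. Your write-up is simply more detailed (you explicitly treat the levels $m\le n$ and verify compactness and $f$-invariance of $\mathbb{X}_\sigma$, which the paper leaves implicit), but the argument is the same.
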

\begin{proof}
By Lemma \ref{cycle-p>3}, if $\tilde{\sigma}$ is the lift of $\sigma$, then $\tilde{\sigma}$ also grows. Applying Lemma \ref{cycle-p>3} again, the lift of $\tilde{\sigma}$ grows. Consecutively, we find that the descendants of $\sigma$ will keep on growing. (In this case, we usually say $\sigma$ {\it always grows} or {\it grows forever}.) Hence, $f_m$ is minimal on $\mathbb{X}_\sigma/ p^m \mathbb{Z}_p$ for each $m\geq n$. Therefore, by Lemma  \ref{minimal-part-to-whole}, $(\mathbb{X}_\sigma,f)$ is minimal.
\end{proof}

\bigskip
\section{Preliminary facts in Number Theory}\label{preliminary-NT}
In this section we give some preliminary facts in number theory.

The field $\mathbb{Q}_{p}$ of $p$-adic numbers always contains a cyclic subgroup of order $p-1$, defined as
$$\mu_{p-1}:=\{x\in \Qp: x^{p-1}=1\}\subset \mathbb{Z}_{p}^{\times}.$$
Here, $\mathbb{Z}_{p}^{\times}$ stands for the set of all invertible elements in $\mathbb{Z}_{p}$.
 %In what follows, we will also use the notation $\mathbb{Z}_{p}^{*}$ for the set $\mathbb{Z}_{p} \setminus \{0\}$.

As a cyclic group, $\mu_{p-1}$ is isomorphic to the multiplicative group $\F_p^{*}$.
\begin{lemma}\label{unityroots}
When $p$ is an odd prime, the group of roots of unity in the field $\mathbb{Q}_{p}$ is $\mu_{p-1}$.

\end{lemma}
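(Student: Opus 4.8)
The plan is to show that any root of unity $\zeta \in \mathbb{Q}_p$ must already lie in $\mu_{p-1}$, which will establish the reverse inclusion (the inclusion $\mu_{p-1} \subset \{\text{roots of unity}\}$ being trivial). The key structural fact I would invoke is that $\mathbb{Z}_p^{\times}$ decomposes as a direct product of the Teichm\"uller part $\mu_{p-1}$ and the principal units $1 + p\mathbb{Z}_p$; equivalently, the reduction map $\mathbb{Z}_p^{\times} \to \mathbb{F}_p^{\times}$ has kernel $1 + p\mathbb{Z}_p$, and $\mu_{p-1}$ provides a canonical set-theoretic splitting via Hensel's lemma applied to $x^{p-1} - 1$. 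So the first step is: every root of unity $\zeta$ has $|\zeta|_p = 1$ (since $\zeta^N = 1$ forces $|\zeta|_p^N = 1$), hence $\zeta \in \mathbb{Z}_p^{\times}$, and we may write $\zeta = \omega \cdot u$ with $\omega \in \mu_{p-1}$ and $u \in 1 + p\mathbb{Z}_p$. Since $\mu_{p-1}$ consists of roots of unity, $u = \zeta \omega^{-1}$ is again a root of unity lying in $1 + p\mathbb{Z}_p$.

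The heart of the argument is therefore the second step: the only root of unity in $1 + p\mathbb{Z}_p$ is $1$ (for $p$ odd). I would prove this by a direct valuation estimate. Suppose $u \in 1 + p\mathbb{Z}_p$ is a root of unity of order $n > 1$; by passing to a suitable power we may assume $n = q$ is prime. If $q \neq p$, then $u - 1 \mid u^q - 1 = 0$ while $u^q - 1 = (u-1)(u^{q-1} + \cdots + 1)$ and the second factor reduces mod $p$ to $q \not\equiv 0$, hence is a unit; this forces $u = 1$, a contradiction. If $q = p$, write $u = 1 + p^m v$ with $v \in \mathbb{Z}_p^{\times}$ and $m \geq 1$; expanding $u^p = (1 + p^m v)^p$ by the binomial theorem, the term $\binom{p}{1} p^m v = p^{m+1} v$ has valuation $m+1$, and every other nontrivial term $\binom{p}{j} p^{jm} v^j$ has valuation $\geq 1 + 2m > m+1$ when $p$ is odd (using $m \geq 1$ and $p \geq 3$, so that $jm \geq 2m \geq m+1$ already for $j \geq 2$, with the extra $+1$ from the prime dividing $\binom{p}{j}$ for $2 \le j \le p-1$, and valuation $pm \ge m+1$ for $j = p$). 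Hence $v_p(u^p - 1) = m + 1 < \infty$, so $u^p \neq 1$, again a contradiction. Therefore $u = 1$ and $\zeta = \omega \in \mu_{p-1}$.

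The main obstacle is the careful bookkeeping in the $q = p$ case: one must check that the binomial coefficient contributions genuinely push every higher-order term strictly past valuation $m+1$, and this is exactly where the hypothesis $p$ odd enters (for $p = 2$ the element $-1 = 1 + 2\mathbb{Z}_2 \setminus \{1\}$ is a root of unity in $1 + 2\mathbb{Z}_2$, so the lemma genuinely fails there). Everything else — the norm/valuation observation, the Teichm\"uller decomposition, the $q \neq p$ case — is routine. An alternative route that sidesteps the binomial estimate is to note that $1 + p\mathbb{Z}_p$ is a pro-$p$ group which, for $p$ odd, is torsion-free because the $p$-adic logarithm gives an isomorphism $1 + p\mathbb{Z}_p \xrightarrow{\sim} p\mathbb{Z}_p$ onto a torsion-free additive group; I would mention this as the conceptual reason but present the elementary valuation argument as the self-contained proof.
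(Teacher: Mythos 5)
Your argument is correct, but it is worth noting that the paper does not actually prove this lemma at all: it simply cites Proposition~1 of Section~6.7 of Robert's book \cite{Rob-GTM198}. What you have written is the standard self-contained proof (essentially the one in the cited reference): reduce to roots of unity in the principal unit group $1+p\Z_p$ via the Teichm\"uller splitting $\Zp^{\times}\cong \mu_{p-1}\times(1+p\Zp)$, then kill prime-order torsion there by the factorization $u^q-1=(u-1)(u^{q-1}+\cdots+1)$ when $q\neq p$ and by the binomial valuation estimate when $q=p$. Both cases are handled correctly, and you correctly identify that $p$ odd is used only in the $q=p$ case (and indeed $-1\in 1+2\Z_2$ shows the statement fails for $p=2$). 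One small imprecision: for the term $j=p$ you assert only $v_p\bigl(\binom{p}{p}p^{pm}v^p\bigr)=pm\geq m+1$, but to conclude $v_p(u^p-1)=m+1$ exactly you need this valuation to be \emph{strictly} larger than $m+1$; this does hold, since $p\geq 3$ and $m\geq 1$ give $pm\geq 3m\geq m+2>m+1$, so the gap is cosmetic and easily closed. Your alternative remark that $1+p\Zp$ is torsion-free for $p$ odd because $\log$ identifies it with $p\Zp$ is also a legitimate (and more conceptual) route. In short: the proposal is a correct, fully worked-out version of the fact the paper merely quotes.
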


\begin{proof}
See  Proposition 1 of Section 6.7 of  \cite{Rob-GTM198}.
\end{proof}
\begin{lemma}\label{homo}
Let $p$ be an odd prime and $\mu_{p-1}$ be the group of roots of unity in the field $\mathbb{Q}_{p}$. Let $$\varepsilon:\mu_{p-1}\rightarrow \mathbb{F}_{p}^*$$ be the reduction homomorphism. Then the following graph commutates.

 \begin{center}
 \setlength{\unitlength}{1mm}
 \begin{picture}(40,40)
 \put(2,4){$\mathbb{F}^*_{p}$}
 \put(10,4){$\vector(1,0){15}$}
 \put(15,1){$x^{2}$}
 \put(28,4){$\mathbb{F}^*_{p}$}
 \put(2,26){$\mu_{p-1}$}
 \put(5,23){$\vector(0,-1){15}$}
 \put(2,15){$\varepsilon$}
 \put(10,27){$\vector(1,0){15}$}
 \put(15,28){$x^{2}$}
 \put(28,26){$\mu_{p-1}$}
 \put(30,23){$\vector(0,-1){15}$}
  \put(31,15){$\varepsilon$}

 \end{picture}
 \end{center}
\end{lemma}
\begin{proof}
Notice that $\mu_{p-1}$ and $\mathbb{F}_{p}^{*}$ are cyclic multiplicative  groups, and   $$\varepsilon:\mu_{p-1}\rightarrow \mathbb{F}_{p}^*$$ is a group automorphism. Furthermore, the square mapping on $\mu_{p-1}$ and the square mapping on $\mathbb{F}_{p}^* $ are group homomorphisms. Hence the graph commutates.
\end{proof}

For a periodic orbit $\hat{\sigma}=(\hat{x}_{1},\hat{x}_{2},\cdots,\hat{x}_{\ell})\subset \mathbb{Z}_{p}^{\times}$ and a cycle $\sigma_n=(x_{1},x_{2},\cdots,x_{\ell})\subset (\mathbb{Z}/p^n\mathbb{Z})^*$ at level $n$, of the same length $\ell$, we write $\sigma_{n} \equiv \hat{\sigma} \mp{n}$ if
$$  x_i \equiv  \hat{x}_i \ \ ({\rm mod} \ p^n) \quad \forall 1\leq i \leq \ell.$$

The following proposition is directly derived from Lemma \ref{homo}.
\begin{proposition}\label{cycleZp}
Let $p$ be an odd prime and $f: x \mapsto x^{2}$ be the square mapping. If  $\sigma_1=(x_{1},x_{2},\cdots,x_{\ell})\subset (\mathbb{Z}/p\mathbb{Z})^*$ is a cycle of the induced mapping $f_{1}$ of length $\ell$, then there exists a unique periodic orbit $\hat{\sigma}=(\hat{x}_{1},\hat{x}_{2},\cdots,\hat{x}_{\ell})\subset \mathbb{Z}_{p}^{\times}$ such that $\sigma_{1} \equiv \hat{\sigma} \mp{}$
%$$\hat{x}_{i}\equiv x_{i}  \ ({\rm mod} \ p), \text{ for all } 1\leq i \leq \ell.$$
\end{proposition}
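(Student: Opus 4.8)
The plan is to produce the required periodic orbit as the Teichm\"uller lift of $\sigma_1$, that is, as the image of $\sigma_1$ under the inverse of the reduction isomorphism $\varepsilon\colon\mu_{p-1}\to\mathbb{F}_p^*$, and then to use the classification of roots of unity in $\mathbb{Q}_p$ (Lemma \ref{unityroots}) to show that no other lift is possible.

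For existence, recall from Lemma \ref{homo} that $\varepsilon$ is a group isomorphism and that the squaring maps on $\mu_{p-1}$ and on $\mathbb{F}_p^*$ commute with $\varepsilon$; hence $\varepsilon$ conjugates the finite dynamical system $(\mathbb{F}_p^*,\,x\mapsto x^2)$ to $(\mu_{p-1},\,x\mapsto x^2)$. Put $\hat{x}_i:=\varepsilon^{-1}(x_i)\in\mu_{p-1}\subset\mathbb{Z}_p^{\times}$ for $1\le i\le\ell$. Commutativity of the diagram in Lemma \ref{homo} gives $\hat{x}_i^{\,2}=\hat{x}_{i+1}$ for $1\le i\le\ell-1$ and $\hat{x}_\ell^{\,2}=\hat{x}_1$, so $\hat{\sigma}:=(\hat{x}_1,\dots,\hat{x}_\ell)$ is a cycle of $f$; since $\varepsilon$ is a bijection intertwining the two maps, the length of $\hat{\sigma}$ equals that of $\sigma_1$, namely $\ell$, and in particular the $\hat{x}_i$ are pairwise distinct. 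Thus $\hat{\sigma}$ is a periodic orbit of $f$ contained in $\mathbb{Z}_p^{\times}$, and by construction $\hat{x}_i\equiv x_i\pmod p$, i.e. $\sigma_1\equiv\hat{\sigma}\pmod p$.

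For uniqueness, suppose $\hat{\sigma}'=(\hat{x}_1',\dots,\hat{x}_\ell')\subset\mathbb{Z}_p^{\times}$ is a periodic orbit of $f$ with $\hat{x}_i'\equiv x_i\pmod p$ for every $i$. Each $\hat{x}_i'$ satisfies $f^{N}(\hat{x}_i')=\hat{x}_i'$ for some $N\ge 1$, that is $(\hat{x}_i')^{2^{N}-1}=1$, so $\hat{x}_i'$ is a root of unity in $\mathbb{Q}_p$; by Lemma \ref{unityroots} this forces $\hat{x}_i'\in\mu_{p-1}$. But $\varepsilon$ is injective on $\mu_{p-1}$, so $\hat{x}_i'$ is the unique element of $\mu_{p-1}$ reducing to $x_i$, i.e. $\hat{x}_i'=\varepsilon^{-1}(x_i)=\hat{x}_i$. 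Hence $\hat{\sigma}'=\hat{\sigma}$, which proves uniqueness. There is no real obstacle here: the only points needing care are that $\hat{\sigma}$ genuinely has length $\ell$ (automatic because $\varepsilon$ is a conjugacy) and that every periodic point of the squaring map in $\mathbb{Z}_p^{\times}$ must be a root of unity (this is exactly where Lemma \ref{unityroots} enters).
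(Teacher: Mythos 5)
Your proof is correct and follows essentially the same route as the paper, which simply notes that the proposition is ``directly derived from Lemma \ref{homo}'': existence via the Teichm\"uller lift through the conjugacy $\varepsilon$, and uniqueness because every periodic point of $x\mapsto x^2$ in $\mathbb{Z}_p^{\times}$ is a root of unity and hence lies in $\mu_{p-1}$ by Lemma \ref{unityroots}. Your write-up just makes explicit the details the paper leaves implicit.
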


Conversely, for a periodic orbit of $f$ in $\Zp$, there exists a corresponding  periodic  orbit of $f_1$ in $\Z/p\Z$. Furthermore, for each integer  $n\geq 1$,
there exists a corresponding  periodic  orbit of $f_n$ in $\Z/p^n\Z$. By Lemma \ref{homo}, the proof of the following proposition is evident.
\begin{proposition}\label{cycleFp}
Let $p$ be an odd prime and  and $f: x \mapsto x^{2}$ be the square mapping. Let  $\hat{\sigma}=(\hat{x}_{1},\hat{x}_{2},\cdots,\hat{x}_{\ell})\subset \mathbb{Z}_{p}^{\times}$ be a periodic orbit of $f$ of length $\ell$. Then $\ell \leq p-1$ and for each $n\geq 1$, there exists a unique cycle $\sigma_{n}\subset \Z/p^{n}\Z$ of  $f_{n}$ of length $\ell$ such that $\sigma_{n} \equiv \hat{\sigma} \mp{n}$.
%$$\sigma_{n} \equiv \hat{\sigma} \mp{n}, \quad i.e., \text{ for all } 1\leq i \leq \ell,  \ x_i \equiv  \hat{x}_i \ \ ({\rm mod} \ p^n).$$
\end{proposition}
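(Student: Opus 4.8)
The plan is to derive both assertions from the single observation that the whole orbit $\hat{\sigma}$ consists of roots of unity. Since $\hat{\sigma}=(\hat{x}_{1},\dots,\hat{x}_{\ell})$ is a periodic orbit of $f$ of length $\ell$, the iterate $f^{\ell}$ (which is the map $x\mapsto x^{2^{\ell}}$) fixes each $\hat{x}_{i}$, so $\hat{x}_{i}^{2^{\ell}}=\hat{x}_{i}$; as $\hat{x}_{i}\in\mathbb{Z}_{p}^{\times}$ is invertible, this forces $\hat{x}_{i}^{2^{\ell}-1}=1$. Hence each $\hat{x}_{i}$ is a root of unity in $\mathbb{Q}_{p}$, and by Lemma \ref{unityroots} we get $\hat{x}_{i}\in\mu_{p-1}$ for all $i$. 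Because $\mu_{p-1}$ has exactly $p-1$ elements and the $\hat{x}_{i}$ are pairwise distinct, we conclude $\ell\le p-1$, which is the first assertion.

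For the second assertion I would invoke Lemma \ref{homo}, which gives that the reduction map $\varepsilon\colon\mu_{p-1}\to\mathbb{F}_{p}^{*}$ is a bijection, in particular injective. Consequently the residues $\hat{x}_{1}\bmod p,\dots,\hat{x}_{\ell}\bmod p$ are pairwise distinct, and a fortiori $\hat{x}_{1}\bmod p^{n},\dots,\hat{x}_{\ell}\bmod p^{n}$ are pairwise distinct for every $n\ge 1$. Setting $x_{i}:=\hat{x}_{i}\bmod p^{n}$ and $\sigma_{n}:=(x_{1},\dots,x_{\ell})$, and using that $f_{n}$ is the reduction of $f$ modulo $p^{n}$ together with $f(\hat{x}_{i})=\hat{x}_{i+1}$ (indices read cyclically modulo $\ell$), we obtain $f_{n}(x_{i})=x_{i+1}$. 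Combined with the distinctness just established, $\sigma_{n}$ is a genuine cycle of $f_{n}$ of length $\ell$, and by construction $\sigma_{n}\equiv\hat{\sigma}\ ({\rm mod}\ p^{n})$; this proves existence.

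Uniqueness is then immediate from the definition of the relation $\equiv\ ({\rm mod}\ p^{n})$: if $\tau_{n}=(y_{1},\dots,y_{\ell})$ is another length-$\ell$ cycle of $f_{n}$ with $\tau_{n}\equiv\hat{\sigma}\ ({\rm mod}\ p^{n})$, then $y_{i}\equiv\hat{x}_{i}\ ({\rm mod}\ p^{n})$ for each $i$, hence $y_{i}=x_{i}$ in $\mathbb{Z}/p^{n}\mathbb{Z}$ and $\tau_{n}=\sigma_{n}$. The only part carrying real content is the passage from ``periodic orbit in $\mathbb{Z}_{p}^{\times}$'' to ``subset of $\mu_{p-1}$'' together with the injectivity of $\varepsilon$; once those are in hand, the fact that the orbit reduces without collision to a cycle of the same length at each level, and that this reduction is unique, follows directly. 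I do not expect any genuine obstacle here — the proposition is essentially a bookkeeping consequence of Lemmas \ref{unityroots} and \ref{homo}.
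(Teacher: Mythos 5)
Your proposal is correct and follows essentially the route the paper intends: the paper dismisses this proposition as ``evident'' from Lemma \ref{homo}, and your argument is precisely the natural fleshing-out --- periodic points are roots of unity, hence lie in $\mu_{p-1}$ (giving $\ell\le p-1$ via Lemma \ref{unityroots}), and the injectivity of the reduction $\varepsilon$ guarantees that the orbit reduces without collision to a length-$\ell$ cycle at every level, with uniqueness immediate from the componentwise definition of the congruence.
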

%\begin{proof} The conclusion follows by induction. \end{proof}

The following lemma is a basic fact in Number Theory.
\begin{lemma}\label{ord}
Let $p$ be an odd prime and $\ell\geq 1$ be an integer. Then the order of $2^{\ell}$ in $(\Z/p\Z)^*$ is $\frac{ord_{p}2}{(\ell,ord_{p}2)}.$
In particular, if $2^{\ell}\equiv 1 \mp{}$, we have $ord_{p}2\mid\ell$.
\end{lemma}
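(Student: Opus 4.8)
The plan is to reduce the statement to the elementary fact that, in a cyclic group generated by an element of order $n$, a power $g^{\ell}$ has order $n/(\ell,n)$; here we take $g=2$ and $n=ord_{p}2$ inside the cyclic group $(\Z/p\Z)^{*}$. First I would unwind the definitions: writing $d:=ord_{p}2$, the order of $2^{\ell}$ in $(\Z/p\Z)^{*}$ is by definition the least positive integer $i$ with $2^{\ell i}\equiv 1\mp{}$, and $2^{\ell i}\equiv 1\mp{}$ holds if and only if $d\mid\ell i$, since $d$ is precisely the order of $2$ (so the set of exponents annihilating $2$ is exactly $d\Z$).

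Next comes the divisibility computation. Put $e:=(\ell,d)$ and write $d=e d'$, $\ell=e\ell'$ with $(\ell',d')=1$. Then $d\mid\ell i\iff e d'\mid e\ell' i\iff d'\mid\ell' i\iff d'\mid i$, the last equivalence using $(\ell',d')=1$. Hence the least positive such $i$ equals $d'=d/e=\dfrac{ord_{p}2}{(\ell,ord_{p}2)}$, which is the first assertion. For the final claim: if $2^{\ell}\equiv 1\mp{}$, then $2^{\ell}$ has order $1$ in $(\Z/p\Z)^{*}$, so $\dfrac{ord_{p}2}{(\ell,ord_{p}2)}=1$, i.e. $ord_{p}2=(\ell,ord_{p}2)$, which says exactly $ord_{p}2\mid\ell$.

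There is essentially no obstacle here, as this is a routine number-theoretic fact; the only place deserving a moment's care is the cancellation step, where one uses that $\ell'$ and $d'$ are coprime. Alternatively one could simply cite a standard reference for the order-of-a-power formula in cyclic groups and specialize it to $\langle 2\rangle\subset(\Z/p\Z)^{*}$, but I would prefer to include the short argument above to keep the section self-contained.
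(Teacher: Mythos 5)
Your proof is correct and takes essentially the same route as the paper's: both rest on the basic fact that $2^{m}\equiv 1 \mp{}$ if and only if $ord_{p}2\mid m$, combined with the gcd cancellation using $(\ell',d')=1$. The only difference is cosmetic — the paper establishes the upper bound separately and then argues the lower bound via the division $\ell=k\cdot ord_{p}2+s$ with a case split on whether $2^{\ell}\equiv 1 \mp{}$, whereas you obtain the exact order in one stroke from the divisibility characterization.
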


\begin{proof}
Notice that $$2^{\ell\cdot \frac{ord_{p}2}{(\ell,ord_{p}2)}}=2^{ord_{p}2\cdot \frac{\ell}{(\ell,ord_{p}2)}}\equiv 1 \mp{}.$$
Thus the order of $2^{\ell}$ is no more than $\frac{ord_{p}2}{(\ell,ord_{p}2)}.$

Write $\ell=k\cdot ord_{p}2+s$ with $k\geq 0$ and $0\leq s < ord_{p}2 $.

If $2^{\ell}\equiv 1 \mp{}$, then
$$1 \equiv 2^{\ell}=2^{k\cdot ord_{p}2+s}\equiv 2^{s} \mp{}.$$ So by the definition of $ord_{p}2 $, we have $s=0$. Hence $$ ord_p 2 \mid \ell \quad \text{ and } \quad \frac{ord_{p}2}{(\ell,ord_{p}2)}=1.$$
Since $ord_{p}(2^\ell) \leq  \frac{ord_{p}2}{(\ell,ord_{p}2)}$, we conclude that
 $$ord_{p}(2^\ell) =1=\frac{ord_{p}2}{(\ell,ord_{p}2)}.$$

If $2^{\ell}\not\equiv 1 \mp{}$, then $s\neq 0$ and $(\ell, ord_{p}2)=(s, ord_{p}2)$.
 Hence for any positive integer $i<\frac{ord_{p}2}{(\ell,ord_{p}2)}=\frac{ord_{p}2}{(s,ord_{p}2)}$, we have $ord_p 2\nmid i\cdot s $.
 So we have
$$2^{i\ell}=2^{i k\cdot ord_{p}2+is}\equiv 2^{is}\not\equiv 1 \mp{}.$$ Thus we also have $$ord_{p}(2^\ell) =\frac{ord_{p}2}{(\ell,ord_{p}2)}.$$
\end{proof}

 Now we calculate the $p$-valuations, denoted by $v_{p}(\cdot)$, of some numbers. It will be useful for finding the minimal decomposition of the square mapping on $\mathbb{Z}_{p}$.
\begin{lemma}\label{vap}
Let $p$ be an odd prime. Then for all $1\leq i<p$, $$v_{p}(2^{ord_{p}2}-1)=v_{p}(2^{i\cdot ord_{p}2}-1)<p-1.$$ In particular, $$v_{p}(2^{ord_{p}2}-1)=v_{p}(2^{p-1}-1).$$
\end{lemma}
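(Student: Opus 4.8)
The plan is to work with the factorization $2^{i\cdot ord_p 2}-1 = (2^{ord_p 2}-1)\cdot\bigl(1 + 2^{ord_p 2} + 2^{2\,ord_p 2} + \cdots + 2^{(i-1)\,ord_p 2}\bigr)$, so that $v_p(2^{i\cdot ord_p 2}-1) = v_p(2^{ord_p 2}-1) + v_p(S_i)$ where $S_i := \sum_{j=0}^{i-1} 2^{j\cdot ord_p 2}$. Since $2^{ord_p 2}\equiv 1\mp{}$ by definition, each term $2^{j\cdot ord_p 2}\equiv 1\mp{}$, hence $S_i \equiv i \mp{}$. For $1\le i < p$ we have $p\nmid i$, so $v_p(S_i)=0$, which gives the first equality $v_p(2^{ord_p 2}-1) = v_p(2^{i\cdot ord_p 2}-1)$ for all such $i$. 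Taking $i$ with $i\cdot ord_p 2 = p-1$ (which is legitimate because $ord_p 2 \mid p-1$ by Fermat, and the quotient $i = (p-1)/ord_p 2$ satisfies $1\le i<p$) yields the ``in particular'' statement $v_p(2^{ord_p 2}-1) = v_p(2^{p-1}-1)$.

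It remains to prove the strict inequality $v_p(2^{ord_p 2}-1) < p-1$; equivalently, by the above, $v_p(2^{p-1}-1) < p-1$. Suppose for contradiction that $v_p(2^{p-1}-1)\ge p-1$, i.e. $p^{p-1}\mid 2^{p-1}-1$. Writing $d := ord_p 2$ and $e := v_p(2^d-1)\ge 1$, the standard ``lifting the exponent'' fact for the odd prime $p$ states that for any multiple $n$ of $d$, $v_p(2^n-1) = e + v_p(n/d)$; in particular the smallest $n$ with $v_p(2^n-1)\ge N$ is $n = d\cdot p^{N-e}$. The multiplicative order of $2$ modulo $p^{p-1}$ is therefore $d\cdot p^{\,p-1-e}$, and this must divide $\varphi(p^{p-1}) = p^{p-2}(p-1)$. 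Since $d\mid p-1$, this forces $p^{\,p-1-e}\mid p^{p-2}$, i.e. $p-1-e\le p-2$, i.e. $e\ge 1$ — which is true and gives no contradiction directly, so I must instead argue as follows: from $2^{p-1}\equiv 1\mp{p-1}$ and $v_p(2^{p-1}-1)=e + v_p((p-1)/d)$, and since $p\nmid (p-1)/d$ (as $(p-1)/d < p$), we get $v_p(2^{p-1}-1) = e$. Hence the assumption becomes $e\ge p-1$, i.e. $p^{p-1}\mid 2^d - 1$. But $2^d - 1 \le 2^{p-2}-1 < p^{p-1}$ for every odd prime $p$ (indeed $2^{p-2} < p^{p-1}$ holds comfortably for all $p\ge 3$), so $2^d-1 < p^{p-1}$ and thus $p^{p-1}\nmid 2^d-1$ unless $2^d-1=0$, which is impossible. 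This contradiction establishes $e = v_p(2^{ord_p 2}-1) < p-1$.

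I expect the main obstacle to be the bookkeeping in the strict-inequality step: the cleanest route is to avoid invoking lifting-the-exponent at all and just use the crude size bound $2^{ord_p 2} - 1 \le 2^{p-2} - 1 < p^{p-1}$ together with $v_p(2^{ord_p 2}-1) = v_p(2^{p-1}-1)$ from the first part of the lemma, which immediately gives $v_p(2^{ord_p 2}-1) \le \log_p(2^{p-2}-1) < p-1$. One should double-check the inequality $2^{p-2} < p^{p-1}$ for the smallest cases $p=3$ ($2 < 9$) and $p=5$ ($8 < 625$), after which it is clear by monotonicity. Thus the only genuinely delicate point is making sure the factorization $v_p(2^{i\,ord_p 2}-1) = v_p(2^{ord_p 2}-1) + v_p(S_i)$ with $S_i\equiv i\mp{}$ is applied with the correct range $1\le i<p$, which is exactly what guarantees $v_p(S_i)=0$.
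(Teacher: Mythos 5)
Your argument is correct in substance and follows essentially the same route as the paper: an elementary computation showing $v_p(2^{i\cdot ord_p2}-1)=v_p(2^{ord_p2}-1)$ for $1\le i<p$, followed by a crude size bound forcing $v_p(2^{ord_p2}-1)<p-1$, and then the choice $i=(p-1)/ord_p2$ for the ``in particular''. Your mechanism for the first step --- factoring out the geometric sum $S_i$ and observing $S_i\equiv i\not\equiv 0\ ({\rm mod}\ p)$ --- differs only cosmetically from the paper's, which writes $2^{ord_p2}=1+p^{s}t$ with $p\nmid t$ and expands $(1+p^{s}t)^i\equiv 1+ip^{s}t\ ({\rm mod}\ p^{s+1})$; both hinge on the single fact $p\nmid i$. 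The detour through lifting-the-exponent is unnecessary, as you yourself concede.

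One intermediate claim is false as stated: $2^{ord_p2}-1\le 2^{p-2}-1$ fails whenever $2$ is a primitive root modulo $p$ (e.g.\ $p=3$: $ord_32=2$ and $2^2-1=3$, while $2^{p-2}-1=1$), since $ord_p2$ divides $p-1$ but need not be at most $p-2$; your ``check'' of the case $p=3$ tests the wrong inequality for the same reason. The bound you actually need survives: $2^{ord_p2}\le 2^{p-1}<p^{p-1}$ because $2<p$, which is exactly the inequality the paper uses. Replace $2^{p-2}$ by $2^{p-1}$ throughout that step and the proof is complete.
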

\begin{proof}
Assume that $v_{p}(2^{ord_{p}2}-1)=s\geq 1$. Then we can write $$2^{ord_{p}2}=1+p^{s}t$$ for some integer $t\geq 1$ such that $(t,p)=1$.
For all $1\leq i<p$, we have
$$(1+p^{s}t)^{i}\equiv 1+ip^{s}t \ ({\rm mod} \ p^{s+1}) .$$ So $$v_{p}(2^{i\cdot ord_{p}2}-1)=s.$$ Since $$p^s<2^{ord_p2}\leq 2^{p-1}<p^{p-1},$$ we conclude
$s<p-1$.

In particular, by taking $i=(p-1)/ord_p 2$, we have $v_{p}(2^{ord_{p}2}-1)=v_{p}(2^{p-1}-1)$.
\end{proof}

\begin{proposition}\label{v2l}
Let $p$ be an odd prime, and $\hat{\sigma}=(\hat{x}_{1},\hat{x}_{2},\cdots,\hat{x}_{\ell})\subset \mathbb{Z}_{p}^{\times} $ be a periodic orbit of $f: x\mapsto x^2$. If $x_{1}=\hat{x}_{1}+p^{n}\alpha$ for some $\alpha \in \mathbb{Z}_{p}\setminus p\mathbb{Z}_{p}$ and $n\geq 1$, then
$$v_{p}(x_{1}^{2^{\ell r}-1}-1)=n+v_{p}(2^{p-1}-1),$$
where $r=\frac{ord_{p}2}{(\ell, ord_{p}2)}$.
\end{proposition}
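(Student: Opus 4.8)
The plan is to factor the root of unity $\hat{x}_{1}$ out of $x_{1}$ and reduce everything to a binomial expansion. The two ingredients I would establish first are: (i) since $\hat{\sigma}$ has period $\ell$, applying $f^{\ell}$ a total of $r$ times gives $f^{\ell r}(\hat{x}_{1})=\hat{x}_{1}$, i.e. $\hat{x}_{1}^{2^{\ell r}}=\hat{x}_{1}$, hence $\hat{x}_{1}^{2^{\ell r}-1}=1$; in particular $\hat{x}_{1}$ is a root of unity, so $\hat{x}_{1}\in\mu_{p-1}\subset\mathbb{Z}_{p}^{\times}$ by Lemma~\ref{unityroots}; and (ii) writing $\beta:=\alpha\hat{x}_{1}^{-1}\in\mathbb{Z}_{p}\setminus p\mathbb{Z}_{p}$ (which is legitimate since $\hat{x}_{1}^{-1}\in\mathbb{Z}_{p}^{\times}$), we have $x_{1}=\hat{x}_{1}(1+p^{n}\beta)$, so that
$$x_{1}^{2^{\ell r}-1}-1=\hat{x}_{1}^{\,2^{\ell r}-1}\,(1+p^{n}\beta)^{2^{\ell r}-1}-1=(1+p^{n}\beta)^{2^{\ell r}-1}-1.$$

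Next I would pin down $v_{p}(2^{\ell r}-1)$. By Lemma~\ref{ord}, $r$ is exactly the order of $2^{\ell}$ in $(\mathbb{Z}/p\mathbb{Z})^{*}$, so $\ell r=\frac{\ell}{(\ell,ord_{p}2)}\cdot ord_{p}2$; set $k:=\frac{\ell}{(\ell,ord_{p}2)}$, so $\ell r=k\cdot ord_{p}2$. Since $\ell\le p-1$ by Proposition~\ref{cycleFp}, we have $1\le k\le\ell\le p-1$, so Lemma~\ref{vap} applies and gives $v_{p}(2^{\ell r}-1)=v_{p}(2^{k\cdot ord_{p}2}-1)=v_{p}(2^{ord_{p}2}-1)=v_{p}(2^{p-1}-1)$. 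Put $s:=v_{p}(2^{p-1}-1)$ and $N:=2^{\ell r}-1$, a positive integer with $v_{p}(N)=s$.

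It then remains to show $v_{p}\big((1+p^{n}\beta)^{N}-1\big)=n+s$. Expanding the finite sum $(1+p^{n}\beta)^{N}-1=\sum_{j=1}^{N}\binom{N}{j}p^{jn}\beta^{j}$, the $j=1$ term is $Np^{n}\beta$, of valuation $s+n$; for $j\ge 2$ the identity $\binom{N}{j}=\frac{N}{j}\binom{N-1}{j-1}$ gives $v_{p}\!\big(\binom{N}{j}\big)\ge s-v_{p}(j)$, so that term has valuation at least $s-v_{p}(j)+jn$, and since $p$ is odd and $j\ge 2$ one has $p^{v_{p}(j)}\le j<p^{\,j-1}$, hence $v_{p}(j)\le j-2<(j-1)n$, which makes $s-v_{p}(j)+jn>s+n$. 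The ultrametric inequality (the linear term being the unique term of least valuation) then forces $v_{p}\big((1+p^{n}\beta)^{N}-1\big)=s+n=n+v_{p}(2^{p-1}-1)$, which is the assertion. Equivalently, this last step is the lifting-the-exponent lemma for the odd prime $p$ applied to $1+p^{n}\beta$ and $1$.

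There is no conceptual difficulty here; the only points needing care are the valuation bookkeeping — specifically, checking $1\le k\le p-1$ so that Lemma~\ref{vap} may be invoked, and verifying that every $j\ge 2$ term in the binomial expansion has valuation strictly exceeding $s+n$ so that the ultrametric inequality is decisive.
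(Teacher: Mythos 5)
Your proof is correct and follows essentially the same route as the paper: both reduce the problem to $v_{p}(2^{\ell r}-1)=v_{p}(2^{p-1}-1)$ via Lemma~\ref{vap} and then show by a binomial expansion that the linear ($j=1$) term strictly dominates in valuation. Your variants --- factoring the root of unity $\hat{x}_{1}$ out first and using the uniform bound $v_{p}\bigl(\tbinom{N}{j}\bigr)\ge s-v_{p}(j)$ with $v_{p}(j)\le j-2$ --- merely streamline the paper's case split between $1<i\le s+1$ and $i>s+1$; the substance of the argument is identical.
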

\begin{proof} By Theorem \ref{graph} and Proposition \ref{cycleZp},  we have $\ell< p-1$.
Then $$\ell r=\frac{\ell \cdot ord_{p}2}{(\ell, ord_{p}2)} <p \cdot ord_{p}2.$$ Observe that $ord_{p}2\mid\ell r$.
By Lemma \ref{vap}, we have  $$v_{p}(2^{\ell r}-1)=v_{p}(2^{p-1}-1).$$

Let $s=v_{p}(2^{p-1}-1)$, then $2^{\ell r}=1+p^{s}h$ for some $h\in \Z\setminus p\Z$.
Observe that $\hat{x}_{1}\in\Zp^\times$ is a periodic point of $f$ of  period $\ell$. Thus  $\hat{x}_{1}^{2^{\ell}}=\hat{x}_{1}$ and then $\hat{x}_{1}^{2^{\ell r}}=\hat{x}_{1}$. Multiplying $\hat{x}_{1}^{-1}$, we obtain  $$\hat{x}_{1}^{p^sh}=\hat{x}_{1}^{2^{\ell r}-1}=1.$$
So,
\begin{align*}
% \nonumber to remove numbering (before each equation)
  x_{1}^{2^{\ell r}-1}-1 &= (\hat{x}_{1}+p^{n}\alpha)^{p^{s}h}-1 \\
   &=  \sum_{i=1}^{p^{s}h}{ p^sh \choose i} \hat{x}_{1}^{p^{s}h-i}\alpha^{i}p^{ni}.\\
\end{align*}
Let $$C_{i}= { p^sh \choose i} \hat{x}_{1}^{p^{s}h-i}\alpha^{i}p^{ni}, \ 1\leq i\leq p^{s}h. $$
Then $v_{p}(C_{1})=n+s$. Moreover,
if $i>s+1$, then $$v_{p}(C_{i})\geq ni>n(s+1)= ns+n\geq s+n.$$
If $1<i\leq s+1$, then by Lemma \ref{vap}, we know that $i<p$. Thus $v_{p}({ p^sh \choose i})=s$. Therefore, $$v_{p}(C_{i})=ni+s>n+s.$$
So,
$$v_{p}(x_{1}^{2^{\ell r}-1}-1)=v_{p}(C_{1})=n+s.$$
\end{proof}

\bigskip
\section{Minimal Decomposition of the square mapping on $\Zp$}\label{decomp}
In this section, we focus on the minimal decomposition of the square mapping on $\mathbb{Z}_{p}$.

By the proof of Lemma \ref{grows-minimal}, if a cycle at a certain level always grows (grows forever) then it will produce a minimal component of $f$. The following proposition shows  when a cycle always grows (grows forever) for the square mapping. A cycle $\sigma$ at level $n$ is said to {\it split $\ell$ times} if $\sigma$ splits, and the lifts of $\sigma$ at level $n+1$ split and inductively all lifts at level $n + j (2\leq j < \ell)$ split.

\begin{proposition}\label{cyclelift}
Let $p$ be an odd prime, and $f: x\mapsto x^2$ be the square mapping. Suppose that $\hat{\sigma}=(\hat{x}_{1},\hat{x}_{2},\cdots,\hat{x}_{\ell})\subset \mathbb{Z}_{p}^{\times}$ is an $\ell$-periodic orbit of $f$. For each $n\geq 1$, let $\sigma_{n}=(x_{1},\cdots,x_{\ell})\subset \mathbb{Z}/p^{n}\mathbb{Z}$ be the $\ell$-cycle of the induced map $f_{n}$ such that
$$\sigma_{n} \equiv  \hat{\sigma} \ \ ({\rm mod} \ p^n), \quad i.e., \text{ for all } 1\leq i \leq \ell,  \ x_i \equiv  \hat{x}_i \ \ ({\rm mod} \ p^n).$$

\noindent {\rm 1)}
If $2^{\ell}\equiv 1 \ ({\rm mod} \ p)$, then $\sigma_{n}$ splits. There is one lift $\sigma_{n+1}$ such that    $\sigma_{n+1}\equiv \hat{\sigma} \ ({\rm mod} \ p^{n+1})$  and all other lifts split $v_{p}(2^{p-1}-1)-1$ times then all descendants at level $n+v_{p}(2^{p-1}-1)$ grow forever.

\noindent {\rm 2)}
If $2^{\ell}\not\equiv 1 \ ({\rm mod} \ p)$, then $\sigma_{n}$ partially splits. Let $\sigma_{n+1}$ be a lift of $\sigma_{n}$.
\begin{itemize}
\item [\rm(a)]If $\sigma_{n+1}$ is the lift of length $\ell$, then $\sigma_{n+1}\equiv \hat{\sigma} \ ({\rm mod} \ p^n),$  and $\sigma_{n+1}$ partially splits.
\item [\rm(b)]If $\sigma_{n+1}$ is a lift of length $\ell r$ for some integer $r>1$, then $r=\frac{ord_{p}2}{(ord_{p}2,\ell)}$  and $\sigma_{n+1}$ split $v_{p}(2^{p-1}-1)-1$ times then all descendants of $\sigma_{n+1}$ at level $n+v_{p}(2^{p-1}-1)$ grow forever.
\end{itemize}
 \end{proposition}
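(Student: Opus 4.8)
The plan is to compute the pair $(a_n,b_n)$ of Section~\ref{induceddynamics} for the cycle $\sigma_n$ and then to follow how $(a_m,b_m)$ evolves along the whole tower of lifts, feeding the outcome into the dictionary (a)--(d) of Section~\ref{induceddynamics} together with Lemmas~\ref{cycle-p>3} and \ref{grows-minimal}. Since $g:=f^{\ell}$ acts by $g(x)=x^{2^{\ell}}$, one has $a_n(x)=2^{\ell}x^{2^{\ell}-1}$ and $b_n(x)=(x^{2^{\ell}}-x)/p^{n}$. Because $\sigma_n\equiv\hat\sigma\mp{n}$, I would use the representative $\hat x_1\in\X_1$, and the periodicity relation $\hat x_1^{2^{\ell}}=\hat x_1$ gives $\hat x_1^{2^{\ell}-1}=1$; hence $a_n\equiv 2^{\ell}\pmod p$ and $b_n(\hat x_1)=0$. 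More generally every descendant of $\sigma_n$ has a representative congruent to $\hat x_1$ modulo $p$, so its $a$-value stays $\equiv 2^{\ell}\pmod p$ at all levels; thus the argument naturally splits according to whether $2^{\ell}\equiv 1\pmod p$ or not.

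In \emph{Case~1} ($2^{\ell}\equiv1\pmod p$) one has $a_n\equiv1$ and $b_n\equiv0\pmod p$, so $\sigma_n$ splits into $p$ lifts of length $\ell$. Exactly one of them is $\equiv\hat\sigma\mp{n+1}$; it again admits $\hat x_1$ as a representative, so it splits again, and inductively forever --- this is the distinguished lift $\sigma_{n+1}$. Any other lift has a representative of the form $\hat x_1+p^{n}\alpha$ with $\alpha\in\Zp\setminus p\Zp$, and I would check that this normal form (same exponent $n$, same residue $\alpha\bmod p$) is preserved under all subsequent splittings, since passing one level higher only adds a term $p^{m}\beta$ with $m>n$, absorbed into $\alpha$. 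By Lemma~\ref{ord}, $2^{\ell}\equiv1\pmod p$ forces $ord_{p}2\mid\ell$, hence $r:=\frac{ord_{p}2}{(\ell,ord_{p}2)}=1$, and Proposition~\ref{v2l} gives $v_{p}\big((\hat x_1+p^{n}\alpha)^{2^{\ell}-1}-1\big)=n+v_{p}(2^{p-1}-1)$. Consequently, at every level $m$ reached by such a descendant, $a_m\equiv1\pmod p$ while $v_{p}(b_m)=n+v_{p}(2^{p-1}-1)-m$: this is $\geq1$ exactly for $m=n+1,\dots,n+v_{p}(2^{p-1}-1)-1$, so by (b) the cycle splits at each of those $v_{p}(2^{p-1}-1)-1$ levels, and it equals $0$ at level $m=n+v_{p}(2^{p-1}-1)$, so by (a) the cycle grows there and, by Lemma~\ref{cycle-p>3}, grows forever. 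This is assertion~1).

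In \emph{Case~2} ($2^{\ell}\not\equiv1\pmod p$) one has $a_n\equiv2^{\ell}\pmod p$ with $2^{\ell}\not\equiv0,1\pmod p$, so by (d) $\sigma_n$ partially splits, and by Lemma~\ref{ord} the order of $2^{\ell}$ in $(\Z/p\Z)^{*}$ is $r=\frac{ord_{p}2}{(ord_{p}2,\ell)}$; hence there is one lift of length $\ell$ and $(p-1)/r$ lifts of length $\ell r$ (so automatically $r>1$). For part (a) I would argue that Proposition~\ref{cycleFp} furnishes a length-$\ell$ cycle at level $n+1$ congruent to $\hat\sigma\mp{n+1}$, and since (d) yields a \emph{unique} length-$\ell$ lift these coincide; its $a$-value is again $\equiv2^{\ell}\pmod p$, so it partially splits once more. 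For part (b), a lift $\tau$ of length $\ell r$ has to be analysed with $\tilde g:=f^{\ell r}$, $\tilde g(x)=x^{2^{\ell r}}$: since $r$ is the order of $2^{\ell}$ we get $2^{\ell r}\equiv1\pmod p$, and $\hat x_1^{2^{\ell r}-1}=1$, so the $a$-value of $\tau$ for $\tilde g$ is $\equiv1\pmod p$ --- we are back in the situation of Case~1. A representative of $\tau$ is of the form $\hat x_1+p^{n}\alpha$ with $\alpha\in\Zp\setminus p\Zp$ (it differs from the distinguished length-$\ell$ lift modulo $p^{n+1}$), and Proposition~\ref{v2l}, applied with this very $r$, gives $v_{p}\big((\hat x_1+p^{n}\alpha)^{2^{\ell r}-1}-1\big)=n+v_{p}(2^{p-1}-1)$. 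Repeating the valuation bookkeeping of Case~1 verbatim (with $\tilde g$ in place of $g$) then shows $\tau$ splits $v_{p}(2^{p-1}-1)-1$ times and all its descendants at level $n+v_{p}(2^{p-1}-1)$ grow forever, which is assertion~2).

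The step I expect to be the main obstacle is the uniform $p$-adic valuation bookkeeping along an entire tower of splittings: I must make sure that every descendant keeps the normal form $\hat x_1+p^{n}\alpha$ with the \emph{same} exponent $n$ and a fixed unit $\alpha\bmod p$, so that Proposition~\ref{v2l} can be reapplied with that $n$ to yield the decreasing arithmetic progression $v_{p}(b_m)=n+v_{p}(2^{p-1}-1)-m$ --- this is precisely what pins down the exact number $v_{p}(2^{p-1}-1)-1$ of splittings before perpetual growth sets in. I also need to confirm that $a_m$ never drops to $0$ nor leaves the residue class of $2^{\ell}$ among the descendants (so that cases (c) and (d) do not intervene unexpectedly), and in Case~2(b) that the passage from $f^{\ell}$ to $f^{\ell r}$ is legitimate and that $2^{\ell r}\equiv1\pmod p$ --- exactly the points where Lemma~\ref{ord} and the value $r=\frac{ord_{p}2}{(\ell,ord_{p}2)}$ are used.
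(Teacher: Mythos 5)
Your proposal is correct and follows essentially the same route as the paper: compute $a_n\equiv 2^{\ell}\pmod p$ and reduce $b_n$ to the valuation $v_p\bigl(x^{2^{\ell r}-1}-1\bigr)$ computed in Proposition~\ref{v2l}, split into the two cases according to whether $2^{\ell}\equiv 1\pmod p$, identify the distinguished lift via Propositions~\ref{cycleZp}--\ref{cycleFp}, and track the decreasing valuation $v_p(b_m)=n+v_p(2^{p-1}-1)-m$ of the non-distinguished descendants until growth sets in, invoking Lemma~\ref{cycle-p>3} for perpetual growth. The normal-form preservation you flag as the main obstacle is exactly the (implicit) induction in the paper's proof and works as you describe, since raising the level adds only $p^{m}\beta$ with $m>n$.
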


\begin{proof}Let $g=f^{\ell}: x\mapsto x^{2^{\ell}}$ be the $\ell$-th iterate of $f$. Then,
\begin{align*}
% \nonumber to remove numbering (before each equation)
 a_{n}(x_1) &= g^{\prime}(x_{1})=2^{\ell}x_{1}^{2^{\ell}-1},\\
 b_{n}(x_{1})&= \frac{x_{1}^{2^{\ell}}-x_{1}}{p^{n}}.
\end{align*}
Since $\hat{x}_{1}^{2^{\ell}}=\hat{x}_{1}$, then $\hat{x}_{1}^{2^{\ell}-1}=1$ and hence $x_{1}^{2^{\ell}-1}\equiv 1\ ({\rm mod} \ p^n)$. Thus, $$a_{n}(x_1)\equiv 2^{\ell}\ ({\rm mod} \ p).$$

\noindent {\rm 1)} Assume  $2^{\ell}\equiv 1 \ ({\rm mod} \ p)$. Then $a_{n}(x_1)\equiv 1\ ({\rm mod} \ p)$.
Let $s=v_{p}(2^{p-1}-1)$. Observe that $ord_{p}2\leq\ell\leq {p-1}$ and $ord_{p}2 \mid \ell$. By Lemma \ref{vap}, we have $v_{p}(2^{\ell}-1)=s\geq1$.
Write $$2^{\ell}=1+p^{s}h$$ for some integer $h$ with $(h,p)=1$.  Since $x_{1}\equiv \hat{x}_{1}\ ({\rm mod} \ p^{n})$, we have $x_{1}=\hat{x}_{1}+p^{n}t$ for some $t\in \mathbb{Z}_{p}$. Thus, by Proposition \ref{v2l} and
  $$g(x_{1})-x_{1} = x_{1}(x_{1}^{2^{\ell}-1}-1),$$
   we deduce that $b_{n}(x_{1})\equiv 0 \ ({\rm mod} \ p)$. Thus $\sigma_{n}$ splits.

Let $\sigma_{n+1}=(y_{1},\cdots,y_{\ell})\subset \mathbb{Z}/p^{n+1}\mathbb{Z}$ be a lift of $\sigma_{n}$. We distinguish the following two cases. \\
  \indent {\rm i)} Assume  $y_{1}\equiv \hat{x}_{1}  \ ({\rm mod} \ p^{n+1})$. Then $\sigma_{n+1} \equiv \hat{\sigma}  \ ({\rm mod} \ p^{n+1})$ and $\sigma_{n+1}$ behaves the same as $\sigma_n$.

  \indent {\rm ii)} Assume  $y_{1}\not\equiv \hat{x}_{1}  \ ({\rm mod} \ p^{n+1})$. Then $y_{1}=\hat{x}_{1}+p^{n}\alpha$ for some $\alpha\in \mathbb{Z}_{p}\setminus p\mathbb{Z}_{p}$. Since $2^{\ell}\equiv 1 \ ({\rm mod} \ p)$, we have $ord_{p}2 \mid \ell$. By
   Proposition  \ref{v2l} and
  $$g(y_{1})-y_{1} = y_{1}(y_{1}^{2^{\ell}-1}-1),$$
   we get
  $$v_{p}(b_{n+1}(y_{1}))=s-1.$$

  If $s=1$, then $\sigma_{n+1}$ grows. By Lemma \ref{cycle-p>3}, the lift of $\sigma_{n+1}$ grows forever.

  If $s>1$, then $\sigma_{n+1}$ splits. By induction, let $\sigma_{n+1+i}$ be a lift of $\sigma_{n+1}$ at level $n+1+i$ for $0\leq i<s-1$, then $\sigma_{n+1+i}$ splits. Let $\sigma_{n+s}=(z_{1},\cdots,z_{\ell})$ be a lift of $\sigma_{n+1}$ at level $n+s$. By Proposition  \ref{v2l},
  $v_{p}(b_{n+s}(z_{1}))=0$, so $\sigma_{n+s}$ grows. By Lemma \ref{cycle-p>3}, the lift of $\sigma_{n+s}$ grows forever.\\

\noindent {\rm 2)} Assume $2^{\ell}\not\equiv 1 \ ({\rm mod} \ p)$. Then  $a_{n}(x_{1})\not\equiv 0,1\ ({\rm mod} \ p)$. Thus $\sigma_{n}$ partially splits. Let $\sigma_{n+1}=(y_{1},\cdots,y_{\ell r})\subset \Z/p^{n+1}\Z$ be a lift of $\sigma_{n}$ of length $\ell r$.

  If $r=1$, then by Proposition \ref{cycleFp}, we get that $\sigma_{n+1}\equiv \hat{\sigma} \mp{n+1},$ and $\sigma_{n+1}$ behaves the same as $\sigma_n$.

  If $r>1$, then $r$ is the order of $a_{n}$ in  $(\Z/p\Z)^*$. By Lemma \ref{ord}, $r=\frac{ord_{p}2}{(ord_{p}2,\ell)}$.
  By Lemma \ref{vap}, we have $$v_{p}(2^{\ell r-1})=v_{p}(2^{p-1})=s.$$ Notice that $y_{1}\equiv \hat{x}_{1} \mp{n}$ and $y_{1}\not\equiv \hat{x}_{1} \mp{n+1}$.  By Proposition  \ref{v2l},
  $$v_{p}(g^{r}(y_{1})-y_{1})=v_{p}(y_{1}(y_{1}^{2^{\ell r-1}}-1))=v_{p}(y_{1}^{2^{\ell r-1}}-1)=n+s.$$
  The same argument as the case ii) of 1) implies that $\sigma_{n+1}$ splits $s-1$ times then all the descendants of $\sigma_{n+1}$ at level $n+s$ grow forever.
\end{proof}

Now we are ready to prove our main result.
\begin{proof}[Proof of Theorem \ref{squringdecompsition}]
 By Theorem \ref{graph}, we know the dynamical structure of $f$ at the first level. Let $\sigma=(x_1,x_2,\cdots, x_{\ell}) \subset  (\Z/p\Z)^*$ be a cycle of length $\ell$ at the first level. By Proposition \ref{cycleZp}, there exists a unique periodic orbit $\hat{\sigma}=(\hat{x}_1,\hat{x}_2,\cdots,\hat{x}_\ell) \subset\X_{\sigma}$ of $f$ with the same length of $\sigma$.

 By Proposition \ref{cyclelift} and Lemmas \ref{minimal-part-to-whole} and \ref{cycle-p>3}, we get the minimal decomposition of system $(\X_{\sigma},f)$:
  $$\X_{\sigma}=\{\hat{x}_{1},\cdots,\hat{x}_{\ell}\}\sqcup \left(\bigsqcup_{n\geq 1}\bigsqcup_{1\leq i\leq \ell}S_{p^{-n}}(\hat{x}_{i})\right),$$
where for each $n\geq 1$, the set $\bigsqcup_{1\leq i\leq \ell}S_{p^{-n}}(\hat{x}_{i})$ consists of   $$\frac{(p-1)\cdot (ord_{p}2,\ell)}{ord_{p}2}\cdot p^{v_{p}(2^{p-1}-1)-1}$$ minimal components and each minimal component consists of $j:=\frac{\ell\cdot ord_{p}2}{(ord_{p}2,\ell)}$ closed disks of radius $p^{-n-v_{p}(2^{p-1}-1)}$.

By Theorem \ref{structure-minimal} and Proposition \ref{cyclelift},  each nontrivial  minimal subsystem (which is not a periodic orbit) of $(\X_{\sigma},f)$ is conjugate to the adding machine on the odometer $\Z_{(p_s)}$, where
  $$(p_s)=(\ell,\ell j,\ell j p,\ell j p^2,\cdots).$$
\end{proof}

%Following the last proposition, the minimal decomposition of the squaring mapping on $\Zp$ is easily obtained.
%
%\begin{proof}[Proof of Theorem \ref{squringdecompsition}]
%For $x\in D_{1}(0)$, then $|f(x)|_{p}=|x|_{p}^{2}\leq 1/p|x|_{p}.$ It is easy to see that $$\lim\limits_{n \rightarrow \infty}f^{n}(x)=0.$$
%
%If $|x|_{p}>1$, then $|f(x)|_{p}=|x|_{p}^{2}\geq p|x|_{p}$. So
%$$\lim_{n\rightarrow \infty}f^{n}(x)=\infty.$$
%
%If $|x|_{p}=1$, then $|f(x)|_{n}=1$, thus $f(S_{1}(a))\subset S_{1}(a)$.
%
%\indent {\rm 1)} When  $p=2$, for $x\in D_{1}(1)$, put $x=1+2a$ for some $a\in \Zp$ .
%Then $|x^{2}-1|=|4a+4a^{2}|\leq 2^{-1}\cdot|x-1|$. So $$\lim\limits_{n \rightarrow \infty}f^{n}(x)=1$$ for all $x\in D_{1}(1)$.
%Thus $D_{1}(1)$ is the attracting basin of fixed point of $1$.
%
%\indent {\rm 2)} If $p\geq 3 $, by Theorem\ref{graph}, Proposition \ref{cyclelift}, Lemma \ref{cycleZp} and Lemma \ref{minimal-part-to-whole}, the conclusion is obtained.
%\end{proof}

\bigskip
\section{Examples}\label{examples}

Recall that $S_{1}(0)$ is the unit sphere and $f$ is the square mapping.
For different primes, the dynamical behaviors of $(S_{1}(0),f)$ are quite different.

A Fermat prime is a prime number $p$ of the form $p=2^{2^n}+1$ where $n$ is a nonnegative integer.
It is known that the iteration graph of square mapping on $\mathbb{F}^*_{p}$ of the nonzero elements in the finite field $\mathbb{F}_{p}$ is a tree attached to the unique loop of $1$ when $p$ is a Fermat prime, and conversely, if there is only one loop then $p$ must be a Fermat prime (\cite{Rog96}). In this case, $0$ and $1$ are the only fixed points of $f$, the disk $D_{1}(0)$ is the attracting basin of the fixed point $0$. The disk $D_{1}(1)$ is the unique  Siegel disk. Furthermore, we have a minimal decomposition of $D_{1}(1)$  by Theorem \ref{squringdecompsition}. The other open disks with radius $1$ are attracted by the Siegel disk $D_{1}(1)$.
Decompose $D_{1}(1)$ as
$$D_{1}(1)=\{1\}\sqcup \left(\bigsqcup_{i\geq 1}S_{p^{-i}}(1)\right).$$
Then each sphere $S_{p^{-i}}(1)$ consists of  $p^{v_{p}(2^{p-1}-1)-1}$ minimal components, and each minimal component is a union of $p$ closed disks of radius $p^{-i-v_{2}(2^{p-1}-1)}$.

\medskip
An odd prime $p$ is called a Wieferich prime if
 $$2^{p-1}\equiv 1 \mp2.$$
If an odd prime $p$ is not a Wieferich prime, we know that $v_{p}(2^{p-1}-1)=1$.
For a cycle $\sigma=(x_1,x_2,\cdots, x_{\ell}) \subset  \Z/p\Z$ of length $\ell$ at the first level,  Proposition \ref{cycleZp} implies that there exists a unique periodic orbit $\hat{\sigma}=(\hat{x}_1,\hat{x}_2,\cdots,\hat{x}_\ell) \subset\X_{\sigma}$ of $f$ with the same length of $\sigma$. By Proposition \ref{cyclelift}, the lifts which do not correspond to the periodic orbit grow forever and the  lift corresponding to the periodic orbit behaves the same as $\sigma$. Thus for each integer $n\geq 1$,  the union $\bigsqcup_{1\leq i\leq \ell}S_{p^{-n}}(\hat{x}_{i})$ of the spheres consists of   $\frac{(p-1)\cdot (ord_{p}2,\ell)}{ord_{p}2}$ minimal components and each minimal component consists of $\frac{\ell\cdot ord_{p}2}{(ord_{p}2,\ell)}$ closed disks of radius $p^{-n-1}$.

The only known Wieferich primes $1093$ and $3511$ were found by Meissner in 1913 and Beeger in 1922, respectively. It has been conjectured that only finitely many Wieferich primes exist. Silverman \cite{Silverman88} showed in 1988 that if the abc conjecture holds, then there exist infinitely many non-Wieferich primes. Numerical evidence suggests that very few of the prime numbers in a given interval are Wieferich primes. A proof of the abc conjecture would not automatically prove that there are only finitely many Wieferich primes, since the set of Wieferich primes and the set of non-Wieferich primes could possibly both be infinite and the finiteness or infiniteness of the set of Wieferich primes would have to be proven separately.

For the known Wieferich primes $p=1093$ or $3511$, we have $v_{p}(2^{p-1}-1)=2$. For a cycle $\sigma=(x_1,x_2,\cdots, x_{\ell}) \subset  \Z/p\Z$ of length $\ell$ at the first level.  Similar to the general case, there exists a unique periodic orbit $\hat{\sigma}=(\hat{x}_1,\hat{x}_2,\cdots,\hat{x}_\ell) \subset\X_{\sigma}$ of $f$ with the same length of $\sigma$. Different to the non-Wieferich primes, the lifts which do not correspond to the periodic orbit split one time at first and then  all the descendants grow forever. For each integer $n\geq 1$,  the union $\bigsqcup_{1\leq i\leq \ell}S_{p^{-n}}(\hat{x}_{i})$ of the spheres consists of   $\frac{p(p-1)\cdot (ord_{p}2,\ell)}{ord_{p}2}$ minimal components and each minimal component consists of $\frac{\ell\cdot ord_{p}2}{(ord_{p}2,\ell)}$ closed disks of radius $p^{-n-2}$.

However, the existence of prime number $p$ such that  $v_{p}(2^{p-1}-1)> 2$ is still unknown.

%\begin{pspicture}(-6,0)(6,4.5)
%\psarc[arcsepB=2pt](-5,4){0.25}{0}{380}
%\rput(-5,4){11}
%
%\psarc[arcsepB=2pt](-3.5,4){0.25}{0}{380}
%\rput(-5,4){11}
%
%\psarc[arcsepB=2pt](-5,4){0.25}{0}{380}
%\rput(-5,4){11}
%
%\psarc[linewidth=0.5pt]{->}(-4,2){0.5}{120}{60}
%
%\end{pspicture}

%%
%\begin{pspicture}(-6,0)(6,5)
%\rput(-4,4){10}
%\psarc[arcsepB=2pt](-4,4){0.25}{0}{359}
%\psarc[arcsepB=2pt](-4,4){0.25}{-50}{50}
%\psline[linewidth=0.5pt]{->}(-4,3.75)(-4,2.80)
%\psarc[arcsepB=2pt](-4,2.5){0.25}{0}{380}
%\rput(-4,2.5){1}
%\psarc[linewidth=0.5pt]{->}(-4,2){0.5}{120}{60}
%
%
%
%
%\rput(1,4.25){4}
%\psarc[arcsepB=2pt](1,4.25){0.25}{0}{400}
%
%\psarc[linewidth=0.5pt]{->}(1,3.35){0.9}{18}{72}
%\psarc[arcsepB=2pt](0.1,3.35){0.25}{0}{400}
%\rput(0.1,3.35){5}
%\psarc[linewidth=0.5pt]{->}(1,3.35){0.9}{108}{162}
%\psarc[arcsepB=2pt](1.9,3.35){0.25}{0}{400}
%\rput(1.9,3.35){9}
%\psarc[linewidth=0.5pt]{->}(1,3.35){0.9}{198}{252}
%\psarc[arcsepB=2pt](1,2.45){0.25}{0}{400}
%\rput(1,2.45){3}
%\psarc[linewidth=0.5pt]{->}(1,3.35){0.9}{288}{342}
%
%\rput(1,5.5){2}
%\psarc[arcsepB=2pt](1,5.5){0.25}{0}{400}
%\psline[linewidth=0.5pt]{->}(1,5.25)(1,4.55)
%
%\rput(1,1.2){6}
%\psarc[arcsepB=2pt](1,1.2){0.25}{0}{400}
%\psline[linewidth=0.5pt]{->}(1,1.45)(1,2.15)
%
%\rput(-1.15,3.35){7}
%\psarc[arcsepB=2pt](-1.15,3.35){0.25}{0}{400}
%\psline[linewidth=0.5pt]{->}(-0.9,3.35)(-0.2,3.35)
%
%\rput(3.15,3.35){8}
%\psarc[arcsepB=2pt](3.15,3.35){0.25}{0}{400}
%\psline[linewidth=0.5pt]{->}(2.9,3.35)(2.2,3.35)
%\end{pspicture}
\bigskip
\section*{Acknowledgement}
Shilei Fan was partially supported by self-determined research funds of CCNU from the colleges¡¯ basic research and operation of MOE (Grant No. CCNU14Z01002) and NSF of China (Grant No. 11231009). Lingmin Liao was partially supported by 12R03191A - MUTADIS (France) and the project PHC Orchid of MAE and MESR of France.

\bigskip
\bibliographystyle{plain}

%\bibliography{ref}

\begin{thebibliography}{10}

\bibitem{Ana94}
V.~S. Anashin.
\newblock Uniformly distributed sequences of {$p$}-adic integers.
\newblock {\em Mat. Zametki}, 55(2):3--46, 188, 1994.

\bibitem{Ana02}
V.~S. Anashin.
\newblock Uniformly distributed sequences of {$p$}-adic integers.
\newblock {\em Diskret. Mat.}, 14(4):3--64, 2002.

\bibitem{Ana06}
V.~S. Anashin.
\newblock Ergodic transformations in the space of {$p$}-adic integers.
\newblock In {\em {$p$}-adic mathematical physics}, volume 826 of {\em AIP
  Conf. Proc.}, pages 3--24. Amer. Inst. Phys., Melville, NY, 2006.

\bibitem{AKY11}
V.~S. Anashin, A.~Khrennikov, and E.~I. Yurova.
\newblock Characterization of ergodic {$p$}-adic dynamical systems in terms of
  the van der {P}ut basis.
\newblock {\em Dokl. Akad. Nauk}, 438(2):151--153, 2011.

\bibitem{CFF09}
J.~Chabert, A.~H. Fan, and Y.~Fares.
\newblock Minimal dynamical systems on a discrete valuation domain.
\newblock {\em Discrete Contin. Dyn. Syst.}, 25(3):777--795, 2009.

\bibitem{Cha84}
G.~Chass{\'e}.
\newblock {\em Applications d'un corps fini dans lui-m\^eme}, volume 149 of
  {\em S\'erie: A [Series: A]}.
\newblock Universit\'e de Rennes I U.E.R. de Math\'ematiques et Informatique,
  Rennes, 1984.
\newblock Dissertation, Universit{\'e} de Rennes I, Rennes, 1984.

\bibitem{CP11Ergodic}
Z.~Coelho and W.~Parry.
\newblock Ergodicity of {$p$}-adic multiplications and the distribution of
  {F}ibonacci numbers.
\newblock In {\em Topology, ergodic theory, real algebraic geometry}, volume
  202 of {\em Amer. Math. Soc. Transl. Ser. 2}, pages 51--70. Amer. Math. Soc.,
  Providence, RI, 2001.

\bibitem{DZunpu}
D.~L. DesJardins and M. E. Zieve.
\newblock Polynomial mappings mod $p^n$.
\newblock arXiv:math/0103046v1.

\bibitem{DP09}
F.~Durand and F.~Paccaut.
\newblock Minimal polynomial dynamics on the set of 3-adic integers.
\newblock {\em Bull. Lond. Math. Soc.}, 41(2):302--314, 2009.

\bibitem{FLYZ07}
A.~H. Fan, M.~T. Li, J.~Y. Yao, and D.~Zhou.
\newblock Strict ergodicity of affine {$p$}-adic dynamical systems on {$\Bbb
  Z_p$}.
\newblock {\em Adv. Math.}, 214(2):666--700, 2007.

\bibitem{FL11}
A.~H. Fan and L.~M. Liao.
\newblock On minimal decomposition of p-adic polynomial dynamical systems.
\newblock {\em Adv. Math.}, 228:2116--2144, 2011.

\bibitem{FFLW2013}
A. H. Fan, S.~L. Fan, L.~M. Liao, and Y.~F. Wang.
\newblock On minimal decomposition of {$p$}-adic homographic dynamical systems.
\newblock {\em Adv. Math.}, 257:92--135, 2014.

\bibitem{FLpre}
S. L. Fan and L.~M. Liao.
\newblock Dynamics of convergent power series on the integral ring of a finite
  extension of $\mathbb{Q}_p$.
\newblock arxiv.org/abs/1401.1062v2, 2014.

\bibitem{GKRS01}
C.~L. Gilbert, J.~D. Kolesar, C.~A. Reiter, and J.~D. Storey.
\newblock Function digraphs of quadratic maps modulo {$p$}.
\newblock {\em Fibonacci Quart.}, 39(1):32--49, 2001.

\bibitem{Jeong2013}
S.~Jeong.
\newblock Toward the ergodicity of {$p$}-adic 1-{L}ipschitz functions
  represented by the van der {P}ut series.
\newblock {\em J. Number Theory}, 133(9):2874--2891, 2013.

\bibitem{Rob-GTM198}
A.~Robert.
\newblock {\em A course in {$p$}-adic analysis}, volume 198 of {\em Graduate
  Texts in Mathematics}.
\newblock Springer-Verlag, New York, 2000.

\bibitem{Rog96}
T.~Rogers.
\newblock The graph of the square mapping on the prime fields.
\newblock {\em Discrete Math.}, 148(1-3):317--324, 1996.

\bibitem{Silverman88}
J.~Silverman.
\newblock Wieferich's criterion and the {$abc$}-conjecture.
\newblock {\em J. Number Theory}, 30(2):226--237, 1988.

\bibitem{SK2006}
L.~Somer and M.~K{\v{r}}{\'{\i}}{\v{z}}ek.
\newblock Structure of digraphs associated with quadratic congruences with
  composite moduli.
\newblock {\em Discrete Math.}, 306(18):2174--2185, 2006.

\bibitem{VS04itergrap}
T.~Vasiga and J.~Shallit.
\newblock On the iteration of certain quadratic maps over {${\rm GF}(p)$}.
\newblock {\em Discrete Math.}, 277(1-3):219--240, 2004.

\bibitem{WalGTM79}
P.~Walters.
\newblock {\em An introduction to ergodic theory}, volume~79 of {\em Graduate
  Texts in Mathematics}.
\newblock Springer-Verlag, New York, 1982.

\bibitem{Yur13}
E.~Yurova.
\newblock On ergodicity of {$p$}-adic dynamical systems for arbitrary prime
  {$p$}.
\newblock {\em p-Adic Numbers Ultrametric Anal. Appl.}, 5(3):239--241, 2013.

\end{thebibliography}

\end{document}